\documentclass[a4paper,11pt]{article}

\usepackage{verbatim}

\usepackage[T1]{fontenc}
\usepackage[latin1]{inputenc}
\usepackage{setspace}
\setlength\parskip{\medskipamount} \setlength\parindent{0pt}
\usepackage{indentfirst}


\usepackage{geometry}
\geometry{margin=2.5cm}
\usepackage{hyperref}

\usepackage{times}
\usepackage{amsthm}
\usepackage{amsmath}
\usepackage{amsfonts}
\usepackage{amssymb}
\usepackage{mathdots}
\usepackage{color}
\usepackage{mathrsfs}
\usepackage{stmaryrd}
\SetSymbolFont{stmry}{bold}{U}{stmry}{m}{n}

\usepackage{bm} 

\usepackage{tikz-cd}
\usepackage{tikz}
\usetikzlibrary{matrix,arrows,decorations.pathmorphing}
\usepackage[all]{xy}
\usepackage{graphicx}
\usepackage{caption}
\usepackage{tkz-graph}
\usepackage{subcaption}

\DeclareMathOperator{\diag}{diag}

\newtheorem{theorem}{Theorem}[section]
\newtheorem{proposition}[theorem]{Proposition}
\newtheorem{conjecture}[theorem]{Conjecture}
\newtheorem{lemma}[theorem]{Lemma}

\theoremstyle{definition}
\newtheorem{example}[theorem]{Example}
\newtheorem{remark}[theorem]{Remark}


\begin{document}
\title{The $U(n)$ Gelfand-Zeitlin system as a tropical limit of Ginzburg-Weinstein diffeomorphisms}

\author{Anton Alekseev\\ Anton.Alekseev@unige.ch \and Jeremy Lane\\ jeremy.lane@unige.ch \and Yanpeng Li\\ yanpeng.li@unige.ch}


\date{}

\footnotetext{\emph{Keywords:} integrable systems, Poisson-Lie groups, tropicalization, Poisson geometry, Gelfand-Zeitlin}

\maketitle

\begin{abstract}
	We show that the Ginzburg-Weinstein diffeomorphism $\mathfrak{u}(n)^* \to U(n)^*$ of \cite{AM} admits a scaling tropical limit on an open dense subset of $\mathfrak{u}(n)^*$. The target of the limit map is a product $\mathcal{C} \times T$, where $\mathcal{C}$ is the interior of a cone, $T$ is a torus, and $\mathcal{C} \times T$ carries an integrable system with natural action-angle coordinates. The pull-back of these coordinates to $\mathfrak{u}(n)^*$ recovers the Gelfand-Zeitlin integrable system of Guillemin-Sternberg \cite{GS}. As a by-product of our proof, we show that the Lagrangian tori of the Flaschka-Ratiu integrable system on the set of upper triangular matrices meet the set of totally positive matrices for sufficiently large action coordinates.
\end{abstract}
\section{Introduction}\label{section 1}

One of the richest settings for the study of integrable systems is the dual vector space $\mathfrak{k}^*$ of a finite dimensional Lie algebra $\mathfrak{k}$, equipped with its canonical Lie-Poisson structure. There are many important examples of integrable systems defined on $\mathfrak{k}^*$, include spinning tops \cite{A}, and Mishchenko-Fomenko systems \cite{MF,S}. Systems on $\mathfrak{k}^*$ also give rise to collective integrable systems via moment maps \cite{GS2,GS3}, for instance, leading to complete integrability of the geodesic flow on certain homogeneous spaces \cite{T}. 

Gelfand-Zeitlin systems, defined by Guillemin and Sternberg on the space of Hermitian matrices (interpreted as $\mathfrak{u}(n)^*$), are one of the most famous examples of such integrable systems \cite{GS}. 
Unlike Mishchenko-Fomenko systems, Gelfand-Zeitlin systems have natural global action-angle coordinates. This structure has lead to results about the symplectic topology of coadjoint orbits \cite{P,NNU}. Unfortunately, Gelfand-Zeitlin systems have only been defined for $\mathfrak{k}$ of type $A$, $B$ and $D$.

Motivated by the problem of generalizing Gelfand-Zeitlin systems, we are naturally brought to the following question: what underlying structures give rise to Gelfand-Zeitlin systems? One answer to this question comes from the study of toric degenerations, which define integrable systems with global action-angle coordinates on coadjoint orbits and many other spaces \cite{NNU,HK}. In this paper we make progress towards a new answer to this question by relating Gelfand-Zeitlin systems on $\mathfrak{u}(n)^*$ to Ginzburg-Weinstein diffeomorphisms and upper cluster algebra structures on dual Poisson-Lie groups.  We believe that this approach can be generalized and will give examples of new integrable systems on Lie algebra duals that have natural global action-angle coordinates.

In more detail, let $K$ be a compact connected Poisson-Lie group (that is, a Lie group equipped with a multiplicative Poisson bracket). Poisson-Lie theory associates to $K$ a dual Poisson-Lie group $K^*$ which is solvable. 
The Ginzburg-Weinstein Theorem says that $K^*$ admits global linearization maps: Poisson isomorphisms $\mathfrak{k}^* \to K^*$ called \emph{Ginzburg-Weinstein diffeomorphisms}. 

When $K=U(n)$, it is possible to describe explicit Ginzburg-Weinstein diffeomorphisms \cite{AM}. The dual Poisson-Lie group $U(n)^*$ has a completely integrable system with global action-angle coordinates due to Flaschka-Ratiu \cite{FR}. On open dense subsets of $\mathfrak{u}(n)^*$ and $U(n)^*$ a Poisson isomorphism is given by the identity in global action-angle coordinates for the Gelfand-Zeitlin and Flaschka-Ratiu systems (the main result of \cite{AM} is that this isomorphism extends to all of $\mathfrak{u}(n)^*$ and $U(n)^*$).

Returning to the general case of compact connected $K$, equip $\mathfrak{k}^*$ with its Lie-Poisson structure $\pi_{\mathfrak{k}^*}$, the dual Poisson-Lie group $K^*$ with its Poisson structure $\pi_{K^*}$, and fix a Ginzburg-Weinstein diffeomorphism ${\rm gw}\colon \mathfrak{k}^* \to K^*$. Since $\pi_{\mathfrak{k}^*}$ is linear, for positive $t$ the scaled Ginzburg-Weinstein diffeomorphism ${\rm gw}_t(A)={\rm gw}(tA)$ is a Poisson isomorphism with respect to $\pi_{\mathfrak{k}^*}$ and the scaled Poisson structure $t \pi_{K^*}$.

It was recently shown that integrable systems can be constructed from $K^*$ by tropicalizing its Poisson structure, which means taking the scaling limit $t\to\infty$ of $t\pi_{K^*}$ \cite{AD, ABHL}. More precisely, denoting $n = {\rm rank}(K)$ and $m = 1/2\left(\dim(K) - {\rm rank}(K)\right)$,
there exist $t$-dependent coordinate charts (coming from an upper cluster algebra structure on a double Bruhat cell)
\[
    \Delta_t \colon K^* \to \mathbb{R}^{n+m}\times T^m
\]
such that as $t\to \infty$ the Poisson structure $(\Delta_t)_*(t\pi_{K^*})$ converges to a constant Poisson structure $\pi_{\infty}$, of rank $m$, on $\mathcal{C} \times T^m$, where $\mathcal{C}\subseteq \mathbb{R}^{n+m}$ is the interior of a certain convex polyhedral cone and $T^m$ is a torus. In terms of coordinates $\zeta$ on $\mathbb{R}^{n+m}$ and $\varphi$ on $T^m$, the constant Poisson structure $\pi_{\infty}$ is of the form
\[
    \left\{ \zeta_i,\zeta_j\right\}_{\infty} = 0,\,\left\{ \varphi_i,\varphi_j\right\}_{\infty} = 0,\, \left\{ \zeta_i,\varphi_j\right\}_{\infty} = \pi_{i,j}
\]
where $\pi_{i,j}$ are some constants (see Theorem \ref{th: AD theorem} and the following discussion and \cite{AD} for more details in the $U(n)$ case. For the general case, see \cite{ABHL}). Since $\pi_{\infty}$ has rank $m$, the coordinates $\zeta$ are global action coordinates for an integrable system on $\mathcal{C} \times T^m$ (the coordinates $\varphi$ differ from global angle coordinates by a linear transformation). 

In general, we see that the composition 
\begin{equation}\label{finite t}
    \begin{tikzcd}
        \mathfrak{k}^*\arrow[r,"\text{gw}_t"] & [2.5em] {K^*} \arrow[r, "\Delta_t"] & [1.6em] {\mathbb{R}^{n+m}\times T^m}
    \end{tikzcd}
\end{equation}
is a Poisson isomorphism with respect to $\pi_{\mathfrak{k}^*}$ and $(\Delta_t)_*(t\pi_{K^*})$ for all positive finite $t$ (on the open dense subset where the composition is defined). One may then make the following conjecture.
\begin{conjecture}\label{conjecture}
    The limit as $t\to \infty$ of the map \eqref{finite t} exists on an open dense subset $\mathcal{U}\subseteq \mathfrak{k}^*$ and defines a Poisson isomorphism between $\mathcal{U}$, equipped with $\pi_{\mathfrak{k}^*}$, and $\mathcal{C}\times T^m$, equipped with $\pi_{\infty}$. 
\end{conjecture}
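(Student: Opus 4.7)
The plan is to verify the conjecture in the case $K = U(n)$ by tracking the scaling behaviour of the composition \eqref{finite t} separately along its two natural factors: the action coordinates $\zeta$, which arise as logarithms of minors on a double Bruhat cell of $U(n)^*$, and the angle coordinates $\varphi$. The starting point is the explicit formula of Alekseev-Meinrenken, which realises ${\rm gw}$ as an iterated product of Thimm-style Gelfand-Zeitlin flows. This presentation makes the action side transparent: the Gelfand-Zeitlin action coordinates on $\mathfrak{u}(n)^*$, namely the eigenvalues of principal submatrices, coincide with the pullback by ${\rm gw}$ of the Flaschka-Ratiu action coordinates on $U(n)^*$.

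First I would compute the limit of the action component. Under $A \mapsto tA$ the Gelfand-Zeitlin eigenvalues scale by $t$, while $\Delta_t$ is constructed precisely so that the $\zeta$-coordinates are tropical limits of logarithms of chosen minors. Combining the explicit expressions for the Flaschka-Ratiu action coordinates as minors with the large-$t$ asymptotics of minors of ${\rm gw}(tA)$ should yield an explicit, $t$-independent limit of the $\zeta$-component, realised as a linear identification of the Gelfand-Zeitlin eigenvalues with the tropical action coordinates on $\mathcal{C}$.

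Next I would treat the angle component. For fixed values of the action variables, the Thimm flows act on ${\rm gw}(tA)$ by torus actions whose rates are prescribed by the Poisson brackets, and the $\varphi$-coordinates of $\Delta_t$ evolve linearly along these flows up to $t$-dependent offsets that must be controlled. Since the full composition is Poisson at every finite $t$, once pointwise convergence of both factors is established on an open dense subset $\mathcal{U}$, the limit is automatically Poisson; one then need only check that its derivative is invertible to conclude that the limit is a diffeomorphism onto $\mathcal{C} \times T^m$. A natural candidate for $\mathcal{U}$ is the preimage under ${\rm gw}_t$ of the totally positive part of the double Bruhat cell for sufficiently large $t$, matching the by-product statement concerning Flaschka-Ratiu tori advertised in the abstract.

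The main obstacle will be controlling the angle coordinates and pinning down $\mathcal{U}$. Whereas the action coordinates enjoy a clean scaling law, the angles, coming from arguments of cluster variables, do not scale linearly under ${\rm gw}_t$: they pick up oscillatory phases from the Thimm flows. To tame these phases I would lift the analysis to the universal cover, use the Alekseev-Meinrenken normal form of ${\rm gw}$ in Gelfand-Zeitlin action-angle coordinates, and exploit positivity of the relevant minors for large $t$ to obtain well-defined limiting angles in $T^m$. Verifying openness and density of $\mathcal{U}$, and in particular that the limit lands unambiguously in the interior of $\mathcal{C}$ times $T^m$, is the delicate point on which the conjecture rests.
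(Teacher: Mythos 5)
Your overall decomposition into action and angle components matches the paper, and the action-coordinate analysis via tropical limits of minors is on the right track (the paper makes it precise using tropical Gelfand-Zeitlin functions $m_i^{(k)}$ and the estimates of Alekseev--Podkopaeva--Szenes). However, there are two genuine gaps in your proposal.

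First, the claim that ``once pointwise convergence of both factors is established on an open dense subset $\mathcal{U}$, the limit is automatically Poisson'' is not sound. A pointwise limit of Poisson maps need not be Poisson: the Poisson condition involves first derivatives, and these can fail to converge even when the maps themselves converge. The paper avoids this trap entirely. It does not deduce Poissonness from the convergence; instead, it shows that the limit coincides with an \emph{a priori known} linear Poisson isomorphism (the one from \cite[Theorem 7]{AD}, which in coordinates $\ell,\psi$ versus $\zeta,\varphi$ is an explicit linear map), and the convergence proof itself goes through derivatives: Lemma \ref{lemma: convergence of derivatives} controls $\partial\ell/\partial\zeta$ and $\partial\ell/\partial\varphi$ via the Implicit Function Theorem applied to the Cauchy--Binet/Pl\"ucker relations, and Lemma \ref{lemma: convergence of vector fields} upgrades this to uniform convergence of Hamiltonian vector fields. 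You would need an analogue of these derivative estimates; the Thimm-flow picture you invoke does not by itself supply them, and without them neither Poissonness nor the limit's being a diffeomorphism is justified.

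Second, your candidate for $\mathcal{U}$ is wrong. You propose $\mathcal{U}$ to be the preimage under ${\rm gw}_t$ of the totally positive chamber. That preimage is, by Theorem \ref{theorem ginzburg-weinstein map}, a union of Lagrangian sections inside the real symmetric matrices, hence far from open (it has positive codimension in $\mathfrak{u}(n)^*$). The correct $\mathcal{U}$ is $\mathcal{H}_0$, the set where all interlacing inequalities are strict. Total positivity enters the paper in a different way: it is used to anchor the angle coordinates. The paper proves (Lemmas \ref{lemma: correspondence}--\ref{lemma: connected component}) that for large $t$ the Flaschka--Ratiu fiber over $\ell\in\mathcal{C}^{\delta}$ meets $AN_+$ exactly once, and that there is a connected component $\mathcal{S}\subseteq{\rm Sym}_0(n)$ with ${\rm gw}_t(\mathcal{S}^\delta)\subseteq AN_+$. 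One then chooses angle coordinates $\psi$ vanishing on $\mathcal{S}$, establishes convergence of $\varphi\circ\Delta_t\circ{\rm gw}_t$ at the single point of $\mathcal{S}\cap L^{-1}(\ell_0)$ on each torus (where it converges to $0$), and extends over the torus by the uniform derivative convergence. Recasting your proposal with this anchoring-plus-derivative-control structure, and replacing the preimage of $AN_+$ by $\mathcal{H}_0$, would bring it in line with a correct argument.
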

If this conjecture is true, then the limit defines a completely integrable system with global action-angle coordinates on an open dense subset of $\mathfrak{k}^*$.  Our main result is that Conjecture \ref{conjecture} holds for $K=U(n)$.

\begin{theorem}\label{main theorem}
	For the Ginzburg-Weinstein diffeomorphism of \cite{AM} and coordinate charts $\Delta_t$ defined by the cluster coordinates used in \cite{AD}, Conjecture \ref{conjecture} is true.  Moreover, the integrable system on the open dense subset $\mathcal{U} \subseteq \mathfrak{u}(n)^*$ obtained by pulling back action-angle coordinates from $\mathcal{C}\times T^m$ is the Gelfand-Zeitlin system (up to a linear transformation).
\end{theorem}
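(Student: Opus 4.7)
The plan is to exploit the fact that, on open dense subsets, the Ginzburg-Weinstein map of \cite{AM} is the identity when expressed in Gelfand-Zeitlin action coordinates on the source and Flaschka-Ratiu action coordinates on the target. Consequently, to understand the limit of $\Delta_t \circ {\rm gw}_t$, it suffices to study the asymptotics of $\Delta_t$ along a Flaschka-Ratiu Lagrangian torus whose action coordinates have been scaled by $t$. The cluster coordinates defining $\Delta_t$ are, up to a linear change, of the form $(1/t)\log|M_\alpha|$ and $\arg(M_\alpha)$ for a distinguished family of generalized minors $M_\alpha$ on $U(n)^*$, so the problem reduces to computing the leading asymptotics of these minors along scaled Flaschka-Ratiu tori, separately for moduli and arguments.

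\textbf{Execution.} First, I would fix an explicit realization of $U(n)^*$ as upper triangular matrices with positive diagonal, write down the Flaschka-Ratiu action coordinates (eigenvalues of nested top-left submatrices) together with the torus action providing the angle coordinates, and identify the precise family of minors defining $\Delta_t$ from \cite{AD}. Second, I would establish the by-product advertised in the abstract: every Flaschka-Ratiu torus with sufficiently large action coordinates meets the totally positive locus. This is essential because the cluster minors are well-behaved precisely where they do not vanish, and the rescaling $A \mapsto tA$ drives the action coordinates into exactly this asymptotic regime. Third, for each such torus I would compute the leading behaviour of each $M_\alpha$ by parametrizing its totally positive portion in a subtraction-free way (e.g.\ via factorization coordinates in the relevant double Bruhat cell) and extracting the dominant monomial; the target is to show that $(1/t)\log|M_\alpha|$ converges to an explicit linear function of the Gelfand-Zeitlin eigenvalues of $A$, while $\arg(M_\alpha)$ converges, modulo $2\pi\mathbb{Z}$, to an explicit linear function of the Flaschka-Ratiu angle coordinates.

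\textbf{Conclusion and main obstacle.} Once the limit map is pinned down in action-angle coordinates, the fact that it is a Poisson isomorphism from $\mathcal{U}$ onto $\mathcal{C} \times T^m$ for $\pi_\infty$ follows automatically, since the finite-$t$ compositions are Poisson isomorphisms and the target Poisson structure is the limit of their pushforwards. Identification with the Gelfand-Zeitlin system up to a linear transformation then reduces to observing that the limit $\zeta$ coordinates form an explicit linear bijection of the Gelfand-Zeitlin eigenvalues. The hard part is the asymptotic analysis in the third step, especially for the phases: while the modulus of a totally positive minor is naturally controlled by a single dominant monomial in the factorization parameters, the arguments can oscillate rapidly in $t$, and one must identify, via a careful matching between the Flaschka-Ratiu torus action and the distinguished family $\{M_\alpha\}$, which linear combinations of $\arg(M_\alpha)$ survive the limit. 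Establishing this dominant-term control uniformly across the scaled tori, and synchronizing the bookkeeping so that the resulting linear map on the angle factor is an isomorphism onto $T^m$, is where I expect the bulk of the technical work to lie.
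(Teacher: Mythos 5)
Your overall architecture matches the paper's, and the action-coordinate half is essentially the paper's argument: use the intertwining property of the Alekseev--Meinrenken map (Theorem~\ref{theorem ginzburg-weinstein map}(i)) to reduce to tropical asymptotics of the minors $\Delta_i^{(k)}$ along the scaled Flaschka--Ratiu tori, parametrize by the subtraction-free matrix factorization chart $\bm{a}\bm{z}_0$, and extract the unique dominant monomial. That is precisely Proposition~\ref{proposition 5.1}, relying on the APS estimates (Propositions~\ref{proposition 4.1}, \ref{proposition 4.2}) and the Lindstr\"om lemma. You also correctly identify the ``by-product'' statement about Flaschka--Ratiu tori meeting the totally positive chamber, and correctly observe that, granted pointwise convergence to a diffeomorphism, the Poisson-isomorphism property of the limit is automatic.

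The genuine gap is in your treatment of the angle coordinates, and it is the crux of the theorem. You propose to compute the $t\to\infty$ asymptotics of $\arg(M_\alpha)$ directly by parametrizing the totally positive part of each torus and then somehow tracking phases around the rest of the torus; you yourself flag that ``the arguments can oscillate rapidly in $t$'' and that a ``careful matching'' is needed, but you give no mechanism for that matching, and the naive direct computation will not close: the Ginzburg--Weinstein map does not come with an explicit formula, so there is no parametrization of the whole scaled torus along which the phase of $\Delta_i^{(k)}\circ{\rm gw}_t$ could be read off. The paper resolves this by a Poisson-geometric, rather than asymptotic, argument (Section~\ref{section 6}): it applies the Implicit Function Theorem to the Cauchy--Binet relations \eqref{eq: implicit function theorem functions} to prove that the Hamiltonian vector fields $X_{\ell_i^{(k)}}$ (taken in the chart $(\zeta,\varphi)$ with respect to $\pi_t$) converge \emph{uniformly} on $\mathcal{C}^\delta\times T^m$ to $2X_{\zeta_i^{(k)}}$ taken with respect to $\pi_\infty$ (Lemmas~\ref{lemma: convergence of derivatives}, \ref{lemma: convergence of vector fields}); this gives uniform convergence of all partial derivatives $\partial(\varphi_q^{(p)}\circ\Delta_t\circ{\rm gw}_t)/\partial\psi_j^{(k)}$. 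Convergence of the functions themselves then only requires knowing the limit at a single point of each torus, and this is exactly where the totally positive statement is used: Lemmas~\ref{lemma: correspondence}--\ref{lemma: connected component} show that for large $t$ the Lagrangian section of symmetric matrices (after choosing the correct connected component $\mathcal{S}$ of ${\rm Sym}_0(n)$ to normalize $\psi\equiv 0$) lands in $AN_+$, i.e.\ in the locus $\varphi\equiv 0$. So the role you assigned to the totally positive result (guaranteeing the minors are nonvanishing) is not its actual role; it is used to pin down the additive constant in the angle limit, which your proposal leaves undetermined. Also a small factual slip: the Gelfand--Zeitlin/Flaschka--Ratiu functions here are built from bottom-right, not top-left, principal submatrices (this is forced by the choice of minors $\Delta_i^{(k)}$ coming from the standard reduced word).
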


Theorem \ref{main theorem} is illustrated by the following example of $U(2)$ where the map ${\rm gw}_t$ can be made completely explicit. For $U(n)$, $n\geq 3$, the map ${\rm gw}_t$ does not admit a tractable closed formula in standard matrix coordinates.

\begin{example} If $\mathfrak{u}(2)^*$ is identified with Hermitian $2\times 2$ matrices, with coordinates
\[ 
    A = \left(\begin{array}{cc}
				x+y & z \\
				\overline{z} & x-y
		\end{array}\right) ,\, x,y\in \mathbb{R},\, z \in \mathbb{C} 
\]
and $U(2)^*$ is identified with upper triangular matrices, with positive diagonal entries, 
\[
    \left(\begin{array}{cc}
				a_1 & b \\
				0 & a_2
	\end{array}\right) ,\, a_1,a_2\in \mathbb{R}_+,\, b \in \mathbb{C},
\]
then the $t$-scaling of the Ginzburg-Weinstein diffeomorphism of \cite{AM} is given by the formula
\[
    {\rm gw}_t(A) = \left(\begin{array}{cc}
		    e^{t(x+y)/2} & e^{i\theta}\sqrt{e^{t(x+r)} + e^{t(x-r)} - e^{t(x+y)} - e^{t(x-y)}} \\
		    0 & e^{t(x-y)/2}
		   \end{array}\right)
\]
where $r = \sqrt{y^2 + |z|^2}$, $z = \rho e^{i\theta}$, and $t>0$.													
The action coordinates of the Gelfand-Zeitlin system are 
\[ 
    \lambda^{(2)}_1 = x+r,\, \lambda^{(2)}_2 = x-r,\, \lambda^{(1)}_1 = x-y.
\]
and they satisfy ``interlacing inequalities'' $\lambda^{(2)}_1\geq \lambda^{(1)}_1\geq \lambda^{(2)}_2$. The functions $\lambda^{(2)}_1,\lambda^{(2)}_2$ are Casimir functions and the function $\lambda^{(1)}_1$ generates a $S^1$ action on the coadjoint orbits (which are 2-spheres).

In coordinates $(\zeta_1^{(1)},\zeta_1^{(2)},\zeta_2^{(2)},\varphi_1^{(2)})$ on $\mathbb{R}^3 \times T^1$, the chart $\Delta_t$ is given by
\[
    \zeta_1^{(1)} = \frac{1}{t}\ln(a_2), \, \zeta_1^{(2)} = \frac{1}{t}\ln(|b|), \, \zeta_2^{(2)} = \frac{1}{t}\ln(a_1a_2),\, \varphi_1^{(2)} = {\rm Arg}(b).
\]
The open dense subset of $\mathcal{U}\subseteq \mathfrak{u}(n)^*$ is the set where the interlacing inequalities are strict. For $A$ in this subset, one computes using the interlacing inequalities that
\[
    \lim_{t\to \infty} \zeta_1^{(2)} \circ {\rm gw}_t = \lim_{t\to \infty}\frac{1}{2t}\ln\left(e^{t\lambda^{(2)}_1} + e^{t\lambda^{(2)}_2} - e^{t(\lambda^{(2)}_1+\lambda^{(2)}_2-\lambda^{(1)}_1)} - e^{t\lambda^{(1)}_1}\right) =\frac{1}{2}\lambda^{(2)}_1.
\]
while $\zeta_1^{(1)}\circ{\rm gw}_t = \lambda_1^{(1)}/2$ and $\zeta_2^{(2)}\circ{\rm gw}_t = (\lambda_1^{(2)}+\lambda_2^{(2)})/2 $ for all $t>0$. 
Thus,
\[
    \left(\begin{array}{c} \zeta_1^{(2)} \\\zeta_2^{(2)} \\ \zeta_1^{(1)} \end{array}\right) \circ {\rm gw}_{\infty} =  \frac{1}{2}\left(\begin{array}{ccc} 1 & 0 & 0 \\
                                        1 & 1 & 0 \\
                                        0 & 0 & 1 \\
    \end{array}\right)\left(\begin{array}{c} \lambda_1^{(2)} \\ \lambda_2^{(2)} \\ \lambda_1^{(1)} \end{array}\right).
\]
\end{example}

The organization of this paper is as follows. In Section \ref{section 2}, we recall the definition of action-angle coordinates for the Gelfand-Zeitlin system on $\mathfrak{u}(n)^*$, some background from the theory of Poisson-Lie groups, and the explicit formula for the Ginzburg-Weinstein diffeomorphisms of \cite{AM}.  Section \ref{section 3} describes the cluster coordinates on $U(n)^*$ that were used in \cite{AD} to tropicalize the Poisson bracket on $U(n)^*$, and explains their relation to the Gelfand-Zeitlin functions via the maps ${\rm gw}_t$. In Section \ref{section 4}, we introduce matrix factorization coordinates on $U(n)^*$ and the tropical estimation results from \cite{APS2}, which are the main ingredient for proving convergence.  Finally, Sections \ref{section 5} and \ref{section 6} are dedicated to the proof of Theorem \ref{main theorem}, which is divided into two propositions. Convergence of action coordinates is proven in Proposition \ref{proposition 5.1}, and convergence of angle coordinates is proven in Proposition \ref{proposition 6.1}. Part of the proof of Proposition \ref{proposition 6.1} involves showing that the Flaschka-Ratiu tori meet the set of totally positive matrices for sufficiently large values of action variables. A list of notation used throughout the paper is provided in Table \ref{table}.

\begin{table}[!h]
\caption{List of notation.}\label{table}
\begin{tabular}{ll}
$\mathcal{H}$          & The set of $n\times n$ Hermitian matrices\\
$\mathcal{H}^+$        & The set of positive definite $n\times n$ Hermitian matrices\\
$\mathcal{H}_0$        & The subset of $\mathcal{H}$ where all interlacing inequalities are strict\\
$\lambda_i^{(k)}$      & Gelfand-Zeitlin function on $\mathcal{H}$\\
$\psi_i^{(k)}$         & Angle coordinates for the Gelfand-Zeitlin system on $\mathcal{H}_0$\\
$\ell_i^{(k)}$         & The sum $\lambda_1^{(k)} + \cdots + \lambda_i^{(k)}$\\
$L$                    & The Gelfand-Zeitlin map with coordinates $\ell_i^{(k)}$ \\
$\Delta_{I,J}$         & Minor consisting of rows and columns $I,J\subseteq \{1,\ldots ,n\}$ respectively\\
$\Delta_i^{(k)}$       & Minor $\Delta_{I,J}$ with $I = \{ n-k+1,\ldots,n-k+i\}$ and $J = \{n-i,\ldots, n\}$  \\
$\zeta_i^{(k)},\varphi_i^{(k)}$       & Defined by $\Delta_i^{(k)} = e^{t\zeta_i^{(k)}+\sqrt{-1}\varphi_i^{(k)}}$   \\
$\Delta_t$             & The $t$-dependent coordinate chart defined by $\zeta_i^{(k)},\varphi_i^{(k)}$ \\
$m_i^{(k)}$            & Tropical Gelfand-Zeitlin functions obtained by tropicalizing the polynomials \eqref{eq: polynomials}\\
$L_{\mathbb{T}}$       & The tropical Gelfand-Zeitlin map with coordinates $m_i^{(k)}$\\
$\mathcal{L}_t$        & The $t$-dependent Gelfand-Zeitlin map on $AN$ \\
$\mathcal{C}$          & The interior of the cone defined by rhombus inequalities \eqref{eq: rhombus inequalities}\\
$\mathcal{C}^{\delta}$ & The subset of $\mathcal{C}$ defined by inequalities \eqref{eq: delta rhombus inequalities}\\
\end{tabular}
\vspace*{-4pt}
\end{table}

\section{Gelfand-Zeitlin and Ginzburg-Weinstein}\label{section 2}

In the first part of this section, we recall the definition of the classical Gelfand-Zeitlin system, along with several details about action-angle coordinates and the Gelfand-Zeitlin cone. In the second part of this section, we briefly recall the theory of Poisson-Lie groups, the Ginzburg-Weinstein theorem, and the explicit Ginzburg-Weinstein diffeomorphism of \cite{AM}.

\subsection{The classical Gelfand-Zeitlin system}\label{section 2.1}

For any Lie group $K$ with Lie algebra $\mathfrak{k}$, the dual vector space $\mathfrak{k}^*$ is endowed with a linear Poisson bracket called the \emph{Lie-Poisson} structure, defined by the formula
\[
    \left\{ f,g\right\}_{\mathfrak{k}^*}(\xi) = \langle \xi, [df_{\xi},dg_{\xi}]\rangle
\]
for $\xi \in \mathfrak{k}^*$ and smooth functions $f,g \in C^{\infty}(\mathfrak{k}^*)$. 

Let $K= U(n)$, the group of unitary $n\times n$ matrices, and let $\mathcal{H}$ denote the set of Hermitian $n\times n$ matrices.  We can identify $\mathfrak{u}(n)^* \cong \mathcal{H}$ via the non-degenerate bilinear form $(X,Y) = {\rm tr}(XY)$. With this identification, the \emph{Gelfand-Zeitlin functions} on $\mathfrak{u}(n)^*$ are the functions $\lambda_i^{(k)}\colon \mathcal{H} \to \mathbb{R}$, $1 \leq i \leq k \leq n$, defined so that for $A\in \mathcal{H}$, 
\[ \lambda^{(k)}_1(A) \geq \cdots \geq \lambda^{(k)}_k(A)\] 
	are the ordered eigenvalues of the $k\times k$ principal submatrix $A^{(k)}$ in the bottom-right corner of $A$ (see Figure \ref{fig 1}).
\begin{figure}[h]
\[
           \left(
           \begin{tikzpicture}
               [scale=0.2, baseline={(current bounding box.center)}]
              
                      \foreach \x in {0,...,8}{
                      \foreach \y in {0,...,-\x}
               \node at (\x,\y){ };
               }
 
               \draw (8,-2) -- (2,-2) -- (2,-8);
               \node at (4,-4) {$\ddots$};
               \draw (8,-6) -- (6,-6) -- (6,-8);
               \draw (8,-7) -- (7,-7) -- (7,-8);
           \end{tikzpicture}
           \right)
\]  
\caption{Bottom-right principal submatrices of $A$.}
\label{fig 1}
\end{figure}
	
The Gelfand-Zeitlin functions satisfy ``interlacing inequalities'',
\begin{equation}\label{eq: interlacing inequalities}
		 \lambda_i^{(k)} \geq \lambda_i^{(k-1)} \geq \lambda_{i+1}^{(k)}, \mbox{ for all }1 \leq i < k \leq n,
\end{equation}
	and the image of the \emph{Gelfand-Zeitlin map} $F\colon\mathcal{H}\to \mathbb{R}^{n(n+1)/2}$, defined by the Gelfand-Zeitlin functions, is the polyhedral cone defined by the inequalities \eqref{eq: interlacing inequalities}, called the \emph{Gelfand-Zeitlin cone}.
	
Let $\mathcal{H}_0$ denote the open dense subset of $\mathcal{H}$ where all the inequalities \eqref{eq: interlacing inequalities} are strict (this will be the set $\mathcal{U}$ in Theorem \ref{main theorem}).  The Gelfand-Zeitlin functions are smooth on $\mathcal{H}_0$ and define global action coordinates for a completely integrable system: the functions $\lambda_1^{(n)}, \ldots , \lambda_n^{(n)}$ are a complete set of Casimir functions, and the functions $\lambda_i^{(k)}$,  $1 \leq i \leq k < n$, generate 
\[
    m = n(n-1)/2
\]
commuting Hamiltonian $S^1$-actions, whose orbits coincide with the joint level-sets of the Gelfand-Zeitlin functions \cite{GS}. 

Angle coordinates on $\mathcal{H}_0$ corresponding to the global action coordinates $\lambda_i^{(k)}$ are defined by choosing a Lagrangian section $\sigma$ of the Gelfand-Zeitlin map and defining $\psi_i^{(k)}(p) = 0$ for all $p\in {\rm Im}(\sigma)$. The Gelfand-Zeitlin functions are invariant under the transpose map $A \mapsto A^T$, which is an anti-Poisson involution of $\mathcal{H}$. The fixed point set of the transpose map is the set ${\rm Sym}(n)$ of (real) symmetric $n\times n$ matrices and the intersection ${\rm Sym}_0(n) = {\rm Sym}(n)\cap \mathcal{H}_0$ is a union of Lagrangian submanifolds of $\mathcal{H}_0$ that are images of sections of the Gelfand-Zeitlin map. Thus, we may fix global angle coordinates for the Gelfand-Zeitlin system by choosing a connected component of ${\rm Sym}_0(n)$.  In these coordinates, the bivector of the Lie-Poisson bracket on $\mathfrak{u}(n)^*$ has the form 
\begin{equation}\label{eq: action angle bivector}
	\pi_{\mathfrak{k}^*} = \sum_{1\leq i \leq k < n} \frac{\partial}{\partial \lambda_i^{(k)}}\wedge\frac{\partial}{\partial \psi_i^{(k)}}.
\end{equation}
	
In this paper, it will be convenient to introduce the following notation. For all $1 \leq i \leq k \leq n$, let 
\begin{equation}\label{eq: definition of ell}
	\ell_i^{(k)} := \lambda_1^{(k)} + \cdots + \lambda_i^{(k)}.
\end{equation}
	The functions $\ell_i^{(k)}$ satisfy a list of inequalities equivalent to \eqref{eq: interlacing inequalities},
\begin{equation}\label{eq: rhombus inequalities}
	\ell_i^{(k+1)}+\ell^{(k)}_{i-1} \geq \ell_{i-1}^{(k+1)}+\ell^{(k)}_{i}, \mbox{ and } \ell_i^{(k+1)}+\ell^{(k)}_{i} \geq \ell_{i+1}^{(k+1)}+\ell^{(k)}_{i-1}.
\end{equation}
with the convention that $\ell_0^{(k)} = 0$ (these inequalities can be visualized as rhombi in a triangular tableau, see \cite[Figures 4 and 5]{APS1}). We denote the interior of the cone defined by \eqref{eq: rhombus inequalities} by $\mathcal{C}$. The set $\mathcal{C}$ is the image under a linear transformation of the interior of the Gelfand-Zeitlin cone. In coordinates $\ell_i^{(k)}, \psi_i^{(k)}$, the Poisson bivector \eqref{eq: action angle bivector} is no longer diagonal; it has a ``lower-triangular'' form,
\begin{equation}\label{eq: ell coordinates bivector}
	\pi_{\mathfrak{k}^*} = \sum_{1\leq i \leq k < n} \sum_{1\leq j \leq i} c_{i,j}^{(k)} \frac{\partial}{\partial \ell_i^{(k)}}\wedge\frac{\partial}{\partial \psi_j^{(k)}},
\end{equation}
where the coefficients $c_{i,j}^{(k)}$ are determined by \eqref{eq: action angle bivector} and \eqref{eq: definition of ell}.

\subsection{Poisson-Lie groups and Ginzberg-Weinstein diffeomorphisms}\label{section 2.3}


First introduced by Drinfel'd \cite{Dr} and Semenov-Tian-Shansky\cite{STS}, a \emph{Poisson-Lie group} is a Lie group $G$ equipped with a Poisson bivector $\pi$ such that group multiplication is a Poisson map. The linearization of $\pi$ at identity of $G$ is a $1$-cocycle on $\mathfrak{g}={\rm Lie}(G)$ with respect to the adjoint representation. This cocycle defines a Lie bracket on $\mathfrak{g}^*$ and endows the pair $(\mathfrak{g},\mathfrak{g}^*)$ with the structure of a Lie bialgebra. The dual Poisson-Lie group of $G$ is the connected, simply-connected Poisson-Lie group whose Lie bialgebra is $(\mathfrak{g}^*,\mathfrak{g})$. More details can be found in \cite{Lu,GW}.

Let $K$ be a connected compact Lie group. There are two natural  Poisson-Lie group structures on $K$, which in turn define two dual Poisson-Lie groups. We now explain these structures in more detail.

First, let $G = K^{\mathbb{C}}$, and fix an Iwasawa decomposition $G = KAN$, $\mathfrak{g} = \mathfrak{k} \oplus \mathfrak{a} \oplus \mathfrak{n}$, relative to a choice of maximal torus $T \subseteq K$ and positive roots. With these choices,  $\mathfrak{a} = \sqrt{-1}\mathfrak{t}$ and $\mathfrak{n}$ is the direct sum of positive root spaces. Let $B(\cdot,\cdot)$ be a nondegenerate, $K$-invariant, bilinear form on $\mathfrak{k}$, and let $B^{\mathbb{C}}$ be its complexification. The imaginary part of $B^{\mathbb{C}}$ defines a non-degenerate pairing between $\mathfrak{k}$ and $\mathfrak{a} \oplus \mathfrak{n}$ which identifies $\mathfrak{k}^* \cong \mathfrak{a} \oplus \mathfrak{n}$ and endows $\mathfrak{k}^*$ with the Lie algebra structure of $\mathfrak{a} \oplus \mathfrak{n}$. The pair $(\mathfrak{k},\mathfrak{k}^*)$ is a Lie bialgebra, which defines a Poisson-Lie group structure on $K$ such that the dual Poisson-Lie group $K^*$ is identified with $AN$.


Second, any Lie group has a trivial Poisson-Lie group structure when equipped with the zero Poisson bivector. The dual Poisson-Lie group of $(K,0)$ is $\mathfrak{k}^*$ equipped with the Lie-Poisson structure defined in the previous subsection. 

The linearization of the Poisson bivector of $K^*$ at the group unit equals the Poisson bivector of $\mathfrak{k}^*$. Therefore $K^*$ and $\mathfrak{k}^*$ are isomorphic as Poisson manifolds in a neighbourhood of their group units by local normal forms for Poisson manifolds \cite{Conn}. In fact, this isomorphism extends globally,

\begin{theorem}[Ginzburg-Weinstein Theorem]\cite{GW}
	The Poisson manifolds $(\mathfrak{k}^*,\pi_{\mathfrak{k}^*})$ and $(K^*,\pi_{K^*})$ are Poisson isomorphic.
\end{theorem}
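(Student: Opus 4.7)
My plan is a Moser-type homotopy argument leveraging compactness of $K$ and the fact that the linearization of $\pi_{K^*}$ at the group unit equals $\pi_{\mathfrak{k}^*}$. As smooth $K$-manifolds (for the coadjoint and dressing actions, respectively) one has $K^*\cong AN\cong\mathfrak{k}^*$, so I first fix a $K$-equivariant diffeomorphism $\Phi\colon\mathfrak{k}^*\to K^*$ whose derivative at the origin is the canonical identification $T_0\mathfrak{k}^*\cong T_eK^*$. Pulling back gives a $K$-invariant bivector $\widetilde\pi:=\Phi^*\pi_{K^*}$ on $\mathfrak{k}^*$ whose first-order part at the origin coincides with $\pi_{\mathfrak{k}^*}$.

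I next interpolate via the natural scaling. With $m_s(\xi)=s\xi$, define $\pi_s:=s^{-1}(m_s)_*\widetilde\pi$ for $s\in(0,1]$. Because $\pi_{\mathfrak{k}^*}$ is homogeneous of weight $1$ and agrees with the lowest-order part of $\widetilde\pi$, this family extends smoothly through $s=0$ to $\pi_0=\pi_{\mathfrak{k}^*}$, with $\pi_1=\widetilde\pi$; each $\pi_s$ is a $K$-invariant Poisson bivector.

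The heart of the argument is to solve the Moser equation
\[
  \partial_s\pi_s + \mathcal{L}_{X_s}\pi_s = 0
\]
for a smooth, $K$-invariant, time-dependent vector field $X_s$. Written using the Lichnerowicz differential $d_{\pi_s}$, this asks that $\partial_s\pi_s$ be exact as a Poisson $2$-cocycle with a primitive depending smoothly on $s$ (including at $s=0$). Averaging over $K$ reduces matters to $K$-invariant tensors, and a transversal-slice computation on each (compact) coadjoint orbit produces a local primitive; equivariance together with a partition of unity in the transverse direction assembles these into a global $X_s$.

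The main obstacle, and the step where compactness of $K$ is essential, is precisely this cohomological solvability with uniform smoothness at $s=0$ — a global $K$-equivariant analogue of Conn's linearization theorem. One also checks completeness of the flow of $X_s$, which is automatic because $X_s$ preserves the Casimir foliation of $\pi_s$, whose leaves are compact coadjoint orbits, so trajectories remain bounded. The time-$1$ flow $\phi_1$ then satisfies $\phi_1^*\widetilde\pi=\pi_{\mathfrak{k}^*}$, and $\Phi\circ\phi_1\colon\mathfrak{k}^*\to K^*$ is the desired global Poisson isomorphism.
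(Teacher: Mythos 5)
The paper does not prove this statement: it cites \cite{GW} and then lists several published proofs, including the Moser-flow argument of \cite{A}, which your sketch most closely resembles. Your scaffolding is standard, and your completeness argument is in fact fine once one notes that a $K$-equivariant $\Phi$ sends coadjoint orbits to dressing orbits, so the symplectic leaves of every $\pi_s$ are precisely the (compact) coadjoint orbits and a Moser field of the form $X_s=\pi_s^\sharp(\alpha_s)$ is tangent to them.

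However, there is a genuine gap at precisely the step you identify as ``the heart of the argument.'' Asserting that ``averaging over $K$, a transversal-slice computation, and a partition of unity'' produce a primitive $\alpha_s$ is not an argument: Poisson cohomology has no Poincar\'e lemma, so local primitives along slices are not automatic, and the sheaf of Poisson cochains is not fine, so partitions of unity do not glue local primitives into a global one. The actual content of the Ginzburg--Weinstein theorem is that the particular $2$-cocycle $\partial_s\pi_s$ is $d_{\pi_s}$-exact with a $K$-equivariant primitive depending smoothly on $s$ through $s=0$; \cite{GW} establish this by an explicit cohomology computation, while \cite{A} sidesteps cohomology by producing the Moser primitive directly from a moment-map construction. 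You also take for granted the existence of a smooth $K$-equivariant diffeomorphism $\Phi\colon\mathfrak{k}^*\to K^*$ with prescribed $1$-jet at the origin. The orbit-by-orbit equivariant diffeomorphisms exist because stabilizers match under the Iwasawa decomposition, but assembling them into a smooth map across strata where orbit types jump requires a separate argument (or an explicit global formula, as in \cite{A,GW}). As written, the proposal reproduces the outline of the Moser approach while omitting the step that makes the theorem true.
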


Such Poisson isomorphisms are called \emph{Ginzburg-Weinstein isomorphisms/diffeomorphisms}. Note that $\mathfrak{k}^*$ is abelian, whereas $K^*$ is not, so Ginzburg-Weinstein diffeomorphisms can not be group homomorphisms, and $\mathfrak{k}^*$ and $K^*$ can not be isomorphic as Poisson-Lie groups. 

There are several proofs of the Ginzburg-Weinstein Theorem in the literature: the original proof \cite{GW} is an existence proof using a cohomology calculation, the proof in \cite{A} gives Ginzburg-Weinstein diffeomorphisms as flows of certain Moser vector fields, the proof in \cite{EEM} is by integration of a non-linear PDE of a classical dynamical $r$-matrix, and the proof in \cite{Boalch} uses the Stokes data of an ODE on a disc with an irregular singular point in the center.


For the remainder of this section, fix $K=U(n)$ and take the Iwasawa decomposition ${\rm GL}_n = KAN$ where $A$ is diagonal matrices with positive real entries and $N$ is upper triangular unipotent matrices. Let $\mathcal{H}^+$ denote the set of positive definite $n\times n$ Hermitian matrices. The map 
\begin{equation}\label{h}
    h\colon K^* \to \mathcal{H}^+,\, b\mapsto bb^*,
\end{equation}
is a diffeomorphism (with inverse given by Gaussian decomposition). It was observed by \cite{FR} that the functions $\ln(\lambda_i^{(k)}),$
define a completely integrable system on $\mathcal{H}^+$ (equipped with the Poisson structure $h_*\pi_{K^*}$). This system was related to the Gelfand-Zeitlin system on $\mathcal{H}$ by \cite{AM} who proved the following theorem.

\begin{theorem}\cite{AM}\label{theorem ginzburg-weinstein map}
	There is exists a Poisson isomorphism $\gamma\colon \mathcal{H} \to \mathcal{H}^+$ such that
	\begin{enumerate} 
		\item $\gamma$ intertwines the Gelfand-Zeitlin functions
		    \begin{equation}\label{eq: AM}
	            \lambda_i^{(k)}(A) = \ln\left(\lambda_i^{(k)}(\gamma(A))\right), \, \forall 1 \leq i \leq k \leq n.
            \end{equation}
		\item $\gamma$ intertwines the Gelfand-Zeitlin torus actions on $\mathcal{H}_0$ and $\mathcal{H}_0^+$.
		\item For any connected component $\mathcal{S}\subseteq Sym_0(n)$, $\gamma(\mathcal{S})\subseteq \mathcal{S}$.
		\item $\gamma$ is equivariant with respect to the conjugation action of $T\subseteq U(n)$.
		\item $\gamma(A + uI) = e^u\gamma(A)$.
		\item $\gamma(\overline{A}) = \overline{\gamma(A)}$.
	\end{enumerate}
\end{theorem}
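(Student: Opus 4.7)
The plan is to construct $\gamma$ first on the open dense locus $\mathcal{H}_0$ by matching action-angle coordinates, and then extend it smoothly to all of $\mathcal{H}$ by a Moser-type argument.

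First I would build $\gamma$ on $\mathcal{H}_0$. There the Gelfand-Zeitlin system provides global action-angle coordinates $(\lambda_i^{(k)},\psi_i^{(k)})$, with the angles pinned to the chosen connected component of $\mathrm{Sym}_0(n)$, in which the Lie-Poisson bivector takes the canonical Darboux form recalled in Section~\ref{section 2.1}. On the other side, the diffeomorphism $h\colon K^*\to\mathcal{H}^+$ transports the Flaschka-Ratiu system to the image $\mathcal{H}^+_0$ of $\mathcal{H}_0$, yielding global action-angle coordinates with action variables $\ln\lambda_i^{(k)}$ and angle variables pinned to the corresponding symmetric section, in which $h_*\pi_{K^*}$ takes the same canonical form. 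Define $\gamma|_{\mathcal{H}_0}$ by
\[
    \lambda_i^{(k)}(\gamma(A)) = \exp\bigl(\lambda_i^{(k)}(A)\bigr), \qquad \psi_i^{(k)}(\gamma(A)) = \psi_i^{(k)}(A).
\]
This is manifestly a Poisson diffeomorphism $\mathcal{H}_0\to\mathcal{H}^+_0$ satisfying (1), (2), (3) by construction; and, because the Gelfand-Zeitlin and Flaschka-Ratiu data and the distinguished symmetric section are all $T$-equivariant, invariant under complex conjugation, and homogeneous under $A\mapsto A+uI$, properties (4), (5), (6) also hold on $\mathcal{H}_0$.

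The real work is extending $\gamma$ smoothly across the walls of $\mathcal{H}_0$, where the angle coordinates degenerate. I would follow the Moser strategy of \cite{AM}: choose a smooth family $\pi_s$, $s\in[0,1]$, of Poisson bivectors on $\mathcal{H}$ interpolating between $\pi_{\mathfrak{k}^*}$ and $h_*\pi_{K^*}$, solve the Moser equation $\partial_s\pi_s + [X_s,\pi_s] = 0$ for a smooth time-dependent vector field $X_s$ globally defined on $\mathcal{H}$, and set $\gamma := \phi_1$, the time-$1$ flow of $X_s$. The Thimm chain of principal submatrices $\mathfrak{u}(1)^* \subset \mathfrak{u}(2)^* \subset \cdots \subset \mathfrak{u}(n)^*$ allows one to carry this out inductively: assuming a map $\gamma_{n-1}$ on $\mathfrak{u}(n-1)^*$ with the analogues of (1)--(6), extend it to $\mathfrak{u}(n)^*$ by Moser-flowing only in the directions transverse to the $U(n-1)$-coadjoint orbits sitting inside each $U(n)$-coadjoint orbit. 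Completeness of the flow is secured by observing that the top-level Casimirs $\lambda_i^{(n)}$ are linear on $\mathcal{H}$ and hence proper, so any integral curve of $X_s$ is trapped in a compact level set of the full Casimir map.

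Finally, I would verify that the extended $\gamma$ still satisfies the six listed properties. This is automatic provided the family $\pi_s$ and the Moser vector field $X_s$ are chosen $T$-equivariant, invariant under complex conjugation, homogeneous under $A\mapsto A+uI$, and tangent to $\mathrm{Sym}_0(n)$ at every inductive stage; these symmetries can be arranged because all the data used enjoy them, and they transfer from $X_s$ to its flow and hence to $\gamma$. The main obstacle I expect is the simultaneous control of smoothness across the walls of the Gelfand-Zeitlin cone, completeness of the Moser flow, and preservation of all the relevant symmetries; once this is in place, the agreement of the extension with the action-angle map of the first step on $\mathcal{H}_0$ follows from uniqueness of Moser flows with prescribed symmetries, and properties (1) and (2) then propagate to all of $\mathcal{H}$ by continuity.
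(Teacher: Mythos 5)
This theorem is cited in the paper from Alekseev--Meinrenken \cite{AM} rather than proved there, so there is no ``paper's own proof'' to compare against; what follows is an assessment of your proposal against the argument actually carried out in \cite{AM}.

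Your outline captures the right landmarks (Moser's method, the Thimm chain $\mathfrak{u}(1)^*\subset\cdots\subset\mathfrak{u}(n)^*$, pinning to the symmetric section, equivariance under $T$ and complex conjugation), but it treats the one genuinely difficult step as an expectation rather than an argument. You write ``solve the Moser equation $\partial_s\pi_s+[X_s,\pi_s]=0$ for a smooth time-dependent vector field $X_s$ globally defined on $\mathcal{H}$'' and then immediately move on; but the existence of such a smooth $X_s$ \emph{across the walls} where the Gelfand--Zeitlin torus action degenerates is precisely the content of \cite{AM}, and it is far from automatic. The action-angle description on $\mathcal{H}_0$ gives you a Poisson diffeomorphism on an open dense set for free; what \cite{AM} actually prove is a smooth extension theorem for the Gelfand--Zeitlin $T^m$-action and the associated flows to the full Poisson manifold, and that is where all the analytic content lives. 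Deferring this to ``the main obstacle I expect'' leaves the proof with a hole exactly where it matters.

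A second, subtler gap is the appeal to ``uniqueness of Moser flows with prescribed symmetries'' to identify your flow-constructed map with the action-angle map. There is no such general uniqueness statement for Moser flows: the Moser vector field is determined only up to the kernel of the Poisson bivector plus symmetries, and different choices give different time-$1$ maps. In \cite{AM} the uniqueness of $\gamma$ is an honest theorem, proved \emph{from} properties (i)--(iii) by observing that the Gelfand--Zeitlin functions and angles give a complete set of coordinates on $\mathcal{H}_0$ and then extending by continuity; it is not deduced from a general Moser principle, and your argument needs exactly this statement before invoking it. Once you establish (a) smooth global existence of an equivariant $X_s$ and (b) the uniqueness of any map satisfying (i)--(iii), the remaining verifications of (4)--(6) by propagating symmetries through the flow are routine, as you say; but (a) and (b) are the theorem, and your proposal sketches around them rather than through them.
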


\begin{remark}
    The map $\gamma$ is uniquely determined by properties (i)-(iii). The map $h^{-1}\circ\gamma$ is a Ginzburg-Weinstein diffeomorphism. One should note that \cite{AM} use a slightly different $h$, but  the composition $h^{-1}\circ\gamma$ remains Poisson. See Remark \ref{remark: choice of map h}.
\end{remark}

\section{Cluster coordinates on dual Poisson-Lie groups}\label{section 3} 

Let $K = U(n)$ and $K^* = AN$ as in the previous section. We define coordinates on an open dense subset of $AN$ following \cite{AD}: for all $1 \leq i \leq k \leq n$, let $\Delta_i^{(k)}$ denote the minor which is the determinant of the solid $i\times i$ submatrix formed by intersecting rows $n-k+1$ to $n-k+i$ and the last $i$ columns (see Figure \ref{fig 2}).
\begin{figure}[h]
\[
	 \left(
	 \begin{tikzpicture}
	    [scale=0.2, baseline={(current bounding box.center)}]
	    
		\foreach \x in {0,...,10}{
		\foreach \y in {0,...,-\x}
	    \node at (\x,\y){ };
	    }

	    \draw (5,-2) -- (10,-2) -- (10,-7) -- (5,-7) -- (5,-2);
	    \draw[dashed] (0,0)--(10,-10);
	    \node at (7.5,-4.5) {$i\times i$};
	\end{tikzpicture}
	\right)
	\begin{tikzpicture}
	    [scale=0.2, baseline={(current bounding box.center)}]
	    
		\foreach \x in {0,...,10}{
		\foreach \y in {0,...,-\x}
	    \node at (\x,\y){ };
	    }

	    \draw [dash dot] (0,-2)--(3,-2);
	    \draw [dash dot] (0,-7)--(3,-7);
	    \node at (7,-2) {$n-k+1$};
	    \node at (7,-7) {$n-k+i$};
	\end{tikzpicture}
\]
\caption{The minors $\Delta_i^{(k)}$.}
\label{fig 2}
\end{figure}
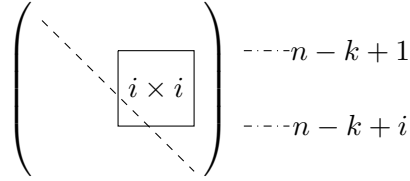

The minor $\Delta_i^{(k)}$ is a principal minor if and only if $i=k$, in which case it takes values in $\mathbb{R}_+$, otherwise it takes values in $\mathbb{C}$. Let $m=n(n-1)/2$; the number of such minors with $i<k$. Together, the $\Delta_i^{(k)}$'s  define a map 
\[
	\Delta\colon AN \to (\mathbb{R}_+)^n \times \mathbb{C}^{m},
\] 
whose restriction to the open dense subset where $\Delta_i^{(k)} \neq 0$ is a coordinate chart (diffeomorphism). On this subset, the equations 
\begin{equation*}
	\begin{split}
		e^{t\zeta_k^{(k)}} & = \Delta_k^{(k)},\, 1 \leq k \leq n ;\\
		e^{t\zeta_i^{(k)}+ \sqrt{-1}\varphi_i^{(k)}} & = \Delta_i^{(k)}, \, 1 \leq i < k \leq n
	\end{split}
\end{equation*}
define a $t$-dependent, polar coordinate chart
\[
	\Delta_t\colon AN \to (\mathbb{R}_+)^{n+m} \times T^{m},
\]
where $T=\mathbb{R}/2\pi\mathbb{Z}$, and $(\mathbb{R}_+)^{n+m} \times T^{m}$ is equipped with coordinates $(\zeta,\varphi)$ defined by the equations above. The diffeomorphisms $\Delta_t$ define Poisson structures
\[
	\pi_t := (\Delta_t)_* (t\pi_{K^*})
\]
on $(\mathbb{R}_+)^{n+m} \times T^{m}$.

Let $\mathcal{C}\subseteq \mathbb{R}^{n+m}$ be the interior of the cone defined by the inequalities \eqref{eq: rhombus inequalities}, replacing $\ell_i^{(k)}$ with $\zeta_i^{(k)}$, and for all $\delta >0$, define $\mathcal{C}^{\delta}$ as the subset where
\begin{equation}\label{eq: delta rhombus inequalities}
	\zeta_i^{(k+1)}+\zeta^{(k)}_{i-1} > \zeta_{i-1}^{(k+1)}+\zeta^{(k)}_{i} + \delta \mbox{ and } \zeta_i^{(k+1)}+\zeta^{(k)}_{i} > \zeta_{i+1}^{(k+1)}+\zeta^{(k)}_{i-1} + \delta.
\end{equation}

\begin{theorem}\cite{AD} \label{th: AD theorem}
    For $t>0$ and $(\zeta,\varphi) \in \mathcal{C}^{\delta}\times T^m$,		
\begin{equation}\label{eq: coefficients of pi infinity}
	\begin{split}
		\left\{ \zeta_i^{(k)}, \zeta_{q}^{(p)}\right\}_t     & = O(e^{-\delta t}).\\
		\left\{ \zeta_i^{(k)}, \varphi_{q}^{(p)}\right\}_t   & = \frac{1}{4}(\varepsilon(k-p)-1)(C-R) + O(e^{-\delta t}).\\
		\left\{ \varphi_i^{(k)}, \varphi_{q}^{(p)}\right\}_t & = O(e^{-\delta t}).\\
	\end{split}
\end{equation}
where $C$ is the number of columns that $\Delta_i^{(k)}$ and $\Delta_q^{(p)}$ have in common, $R$ is the number of rows that $\Delta_i^{(k)}$ and $\Delta_q^{(p)}$ have in common, and $\varepsilon(x) = x/|x|$ with $\varepsilon(0)=0$. Note that $\delta$ is constant in the big-O notation $O(e^{-\delta t})$. 
\end{theorem}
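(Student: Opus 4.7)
The Poisson bivector $\pi_{K^*}$ is the standard one on $AN$ coming from the Iwasawa decomposition of $\mathfrak{gl}_n$, and my plan is to compute the leading behavior of the brackets $\{\Delta_i^{(k)}, \Delta_q^{(p)}\}_{K^*}$ and $\{\Delta_i^{(k)}, \overline{\Delta_q^{(p)}}\}_{K^*}$ in the tropical regime and then translate the result into brackets of $\zeta$ and $\varphi$. Using the explicit Poisson bracket on matrix entries of $AN$ together with the rule for brackets of products, one should obtain identities of the form
\[
    \{\Delta_i^{(k)}, \Delta_q^{(p)}\}_{K^*} = \sqrt{-1}\, \beta_{(i,k),(q,p)}\, \Delta_i^{(k)} \Delta_q^{(p)} + P_{(i,k),(q,p)},
\]
together with a similar one for the conjugate bracket with constant $\beta'_{(i,k),(q,p)}$ and remainder $Q_{(i,k),(q,p)}$, where $\beta,\beta'$ are explicit real numbers depending on the row/column overlap of the two minors and on the sign $\varepsilon(k-p)$, and $P,Q$ are sums of products of other solid minors appearing through Plücker-type identities. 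I would pin down $\beta,\beta'$ by direct computation in $U(2)$ and $U(3)$ and then extend to general $n$ by an induction on the size of the minors, also checking that the real part of the log-canonical coefficient is forced to vanish by the reality of the bracket (see the last paragraph).

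The crucial tropical step is to show that on $\mathcal{C}^\delta\times T^m$ the remainders $P,Q$ satisfy
\[
    \frac{|P_{(i,k),(q,p)}|}{|\Delta_i^{(k)}\Delta_q^{(p)}|} = O(e^{-\delta t}), \qquad \frac{|Q_{(i,k),(q,p)}|}{|\Delta_i^{(k)}\overline{\Delta_q^{(p)}}|} = O(e^{-\delta t}).
\]
Each monomial inside $P$ or $Q$ is a product of solid minors whose total $\zeta$-weight is strictly smaller than $\zeta_i^{(k)}+\zeta_q^{(p)}$; the gap is a sum of rhombus differences, each at least $\delta$ by \eqref{eq: delta rhombus inequalities}. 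Dividing by the dominant product and summing, the bound is uniform in $(\zeta,\varphi)\in\mathcal{C}^\delta\times T^m$ because the ratio depends on $\zeta$ only through bounded combinations of exponentials $e^{-\delta t}$. This yields
\[
    \{\log\Delta_i^{(k)}, \log\Delta_q^{(p)}\}_{K^*} = \sqrt{-1}\,\beta + O(e^{-\delta t}), \quad \{\log\Delta_i^{(k)}, \log\overline{\Delta_q^{(p)}}\}_{K^*} = \sqrt{-1}\,\beta' + O(e^{-\delta t}).
\]

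Finally, writing $\log\Delta_i^{(k)} = t\zeta_i^{(k)}+\sqrt{-1}\varphi_i^{(k)}$ and its conjugate, the brackets of $\zeta$'s and $\varphi$'s are obtained as the four linear combinations of the two log-bracket identities above. After multiplying by $t$ to implement $\pi_t = t\pi_{K^*}$, any real part of the log-canonical constants would produce contributions of order $1/t$ in $\{\zeta,\zeta\}_t$ and of order $t$ in $\{\varphi,\varphi\}_t$; since $\pi_t$ is a real bivector these are forced to be zero, confirming that the leading constants are purely imaginary and that $\{\zeta,\zeta\}_t$ and $\{\varphi,\varphi\}_t$ are $O(e^{-\delta t})$. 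The imaginary part then gives $\{\zeta_i^{(k)},\varphi_q^{(p)}\}_t = (\beta-\beta')/2 + O(e^{-\delta t})$, and the identification $(\beta-\beta')/2 = \tfrac14(\varepsilon(k-p)-1)(C-R)$ is the combinatorial heart of the theorem. I expect the main obstacle to be precisely this identification from the R-matrix data together with the uniform control of $P$ and $Q$; each step requires a case analysis based on whether the two minors are nested, disjoint, or partially overlapping, and the combinatorics of the overlap must match $(\varepsilon(k-p)-1)(C-R)$ in every case.
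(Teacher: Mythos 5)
Your overall strategy — compute the Poisson brackets of the solid minors $\Delta_i^{(k)}$ on $AN$, extract an (approximately) log-canonical leading term, control the remainder by the $\delta$-rhombus inequalities, and translate to brackets of $\zeta,\varphi$ — is indeed the route taken in \cite{AD}. The remainder estimate via tropical dominance is also in the right spirit. But two crucial steps are missing, and one of the arguments you do give is not valid.

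First, the assertion that \emph{reality of $\pi_t$} forces the real parts of the log-canonical constants to vanish is incorrect. If $\{\log\Delta_i^{(k)},\log\Delta_q^{(p)}\} = c$ with $c$ constant, reality of the bivector only gives $\{\log\overline{\Delta_i^{(k)}},\log\overline{\Delta_q^{(p)}}\} = \bar c$; it places no constraint on $c$ itself. Likewise, $\pi_t$ is a perfectly well-defined real Poisson bivector for every finite $t$, so a contribution of order $t$ to $\{\varphi_i^{(k)},\varphi_q^{(p)}\}_t$ is not a contradiction — it would simply mean the tropical limit does not exist. In other words, the vanishing of $\mathrm{Re}(c)$ and $\mathrm{Re}(d)$ (and hence the $O(e^{-\delta t})$ bounds on $\{\zeta,\zeta\}_t$ and $\{\varphi,\varphi\}_t$) is precisely the content of the theorem, not something you can derive a priori from reality. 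It must come out of the explicit $r$-matrix computation of $\pi_{K^*}$, in which the brackets of matrix entries (and hence of holomorphic minors) carry an explicit factor of $\sqrt{-1}$; that is what makes the log-canonical constant purely imaginary. This has to be checked, not assumed.

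Second, the combinatorial heart of the statement — that the imaginary parts combine to give exactly $\tfrac14(\varepsilon(k-p)-1)(C-R)$ — is deferred to ``direct computation in $U(2)$ and $U(3)$ and induction'' and to an unperformed case analysis. Similarly, the bound on the remainders $P,Q$ is stated but not derived: you would need to exhibit, for each Pl\"ucker monomial that appears, a chain of rhombus differences witnessing the $\delta$-gap, and verify this uniformly over the (finitely many, but not a priori obviously controllable) terms. Without these, what you have is a correct outline of the proof in \cite{AD}, not a proof. One smaller inaccuracy worth noting: for the holomorphic bracket $\{\Delta_i^{(k)},\Delta_q^{(p)}\}$ the initial-seed cluster variables are in fact \emph{exactly} log-canonical (so $P=0$); the Pl\"ucker-type remainder only arises in the mixed bracket $\{\Delta_i^{(k)},\overline{\Delta_q^{(p)}}\}$, because complex conjugation takes you outside the cluster.
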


This theorem prompts the definition of a Poisson manifold $(\mathcal{C}\times T^m,\pi_{\infty})$ where is $\pi_{\infty}$ is the constant Poisson structure obtained by taking the $t\to \infty$ limit of $\pi_t$, with coefficients given by \eqref{eq: coefficients of pi infinity}. The coordinates $\zeta_1^{(n)},\ldots , \zeta_n^{(n)}$ are a complete set of Casimir functions for $\pi_{\infty}$. By \cite[Theorem 7]{AD}, there exists a Poisson isomorphism from $(\mathcal{C}\times T^m,\pi_{\infty})$ to $(\mathcal{H}_0,\pi_{\mathfrak{k}^*})$, linear with respect to coordinates $\ell,\psi$ as in Section \ref{section 2}, of the form
\begin{equation}\label{eq: AD poisson isomorphism}
	\left(\begin{array}{c} \zeta \\ \varphi \end{array}\right)
	= \left(\begin{array}{cc}
		\frac{1}{2}I & 0 \\
		0   & B \\
	\end{array}\right)
	\left(\begin{array}{c} \ell \\ \psi \end{array}\right),
\end{equation}
where $B$ is the integer matrix defining an automorphism of $T^m$, uniquely determined so that the map is Poisson.  Expanding this formula, the isomorphism is of the form
\begin{equation}\label{ordering of angles}
    \begin{split}
        \zeta_i^{(k)}   & = \frac{1}{2}\ell_i^{(k)} \\
        \varphi_i^{(k)} & = \psi_i^{(k-1)} + \mbox{ ``higher terms...''}
    \end{split}
\end{equation}
where $\psi_q^{(p)}$ is higher than $\psi_i^{(k)}$ if $p<k$ or $p=k$ and $q>i$.

\begin{remark}
	Theorem \ref{th: AD theorem} has been generalized in \cite{ABHL}. Given an arbitrary complex semisimple Lie group $G$ and a reduced word for the longest element $w_0$ of the Weyl group, the coordinate algebra $\mathbb{C}[G^{e,w_0}]$ of the double Bruhat cell $G^{e,w_0}=B\cap B_-w_0B_-$ carries an upper cluster algebra structure along with a cluster seed \cite{BFZ}. In the coordinates provided by the cluster seed, the Poisson structure on the dual Poisson-Lie group $K^*$ ($K^{\mathbb{C}} = G$) admits a ``partial tropicalization'': a $t$-deformation such that the $t=\infty$ limit is an integrable system $(\mathcal{C}\times T^m, \pi_{\infty})$, where $\mathcal{C}$ is the interior of an extended string-cone and $\pi_\infty$ is a constant Poisson bracket. Setting $G = {\rm GL}_n$ and taking the standard reduced word for $w_0$, the cluster seed is given by the minors $\Delta_i^{(k)}$ and one recovers Theorem \ref{th: AD theorem}.
\end{remark}  

Given a $n\times n$ matrix $b$, and subsets $I,J \subseteq \{1,\ldots,n\}$ with $|I| = |J|$, let $\Delta_{I,J}(b)$ denote the determinant of the  sub-matrix with rows $I$ and columns $J$. For any $1\leq i \leq n$, by the Cauchy-Binet formula,
\begin{equation}\label{eq: cauchy-binet}
    \sum_{|I|=i} \prod_{j\in I}\lambda_j(bb^*) 
    = \sum_{|I|=i}\Delta_{I,I}(bb^*)
    = \sum_{|I|=|J|=i}|\Delta_{I,J}(b)|^2,
\end{equation}
where $\lambda_j(bb^*)$ denote the eigenvalues of $bb^*$ (in our notation, these are the Gelfand-Zeitlin functions $\lambda_j^{(n)}$). 

We define a family of Ginzburg-Weinstein diffeomorphisms
\begin{equation}
    {\rm gw}_t \colon \mathcal{H}\to AN,\, {\rm gw}_t(A) = h^{-1}(\gamma(tA)),
\end{equation}
where $\gamma$ is the Ginzburg-Weinstein diffeomorphism of \cite{AM}, and $h$ is as defined in Section \ref{section 2} (see Equation \eqref{h} and Theorem \ref{theorem ginzburg-weinstein map}). For all $t>0$, 
\[
    ({\rm gw}_t)_*\pi_{\mathfrak{k}^*} = (h^{-1}\circ \gamma)_*(t\pi_{\mathfrak{k}^*}) = t\pi_{K^*}.
\]
Denote $b_t = {\rm gw}_t(A)$. Combining \eqref{eq: cauchy-binet} and \eqref{eq: AM}, for all $1\leq i \leq n$,
\begin{equation}\label{eq: master equation 1} 
    \sum_{|I|=i} e^{t \sum_{j\in I}\lambda_j(A)}  
    = \sum_{|I|=|J|=i}|\Delta_{I,J}(b_t)|^2.
\end{equation}
Note that if $b$ is upper triangular, the lower right $k\times k$ submatrix $(bb^*)^{(k)} = b^{(k)}(b^{(k)})^*$, so Equation \eqref{eq: master equation 1} remains valid when the eigenvalues $\lambda_j$ are replaced by the Gelfand-Zeitlin functions $\lambda_j^{(k)}$, and we take $I,J \subseteq \{n-k+1,\ldots,n\}$.

\begin{remark}\label{remark: choice of map h}
    If we take $h(b) = \sqrt{bb^*}$ instead, then Equation \eqref{eq: master equation 1} \emph{does not} hold for the Gelfand-Zeitlin functions $\lambda_j^{(k)}$, since in general the bottom right $k\times k$ submatrix of $\sqrt{bb^*}$ is not equal to $\sqrt{b^{(k)}(b^{(k)})^*}$.
\end{remark}
	
Using the generalized Pl\"ucker relations, every minor $\Delta_{I,J}$ can be written as a Laurent polynomial in the $\Delta_i^{(k)}$'s (this is a particular example of the {\em Laurent phenomenon} in the theory of cluster algebras, see \cite{FZ}). We can therefore write each $|\Delta_{I,J}|^2$ as a Laurent polynomial $P_{I,J}$ (in variables $\Delta_i^{(k)}$ and $\overline{\Delta}_i^{(k)}$)
\begin{equation}\label{laurent polys}
    \vert\Delta_{I,J}\vert^2 = P_{I,J}\left(\Delta_i^{(k)}, \overline{\Delta}_i^{(k)}\right)_{i,k}.
\end{equation}
Thus, equations \eqref{eq: master equation 1} can be written in the form 
\begin{equation}\label{eq: master equation 2} 
    \sum_{|I|=j, I \subseteq \{1,\ldots k\}} e^{t \sum_{i\in I}\lambda_i^{(k)}(A)} 
    =  \sum_{\substack{|I|=|J|=j\\ I,J\subseteq \{n-k+1,\ldots,n\}}} P_{I,J}\left(\Delta_i^{(k)}(b_t), \overline{\Delta}_i^{(k)}(b_t)\right).
\end{equation}

\begin{example}\label{example: poly} For $n=3$, Equation \eqref{eq: master equation 2} corresponding to $j=1$, $k=3$ is
\begin{equation*}
	\begin{split}
		e^{t\lambda_1^{(3)}}+ e^{t\lambda_2^{(3)}}+ e^{t\lambda_3^{(3)}}  &= |\Delta_{1,3}|^2 + |\Delta_{2,3}|^2 + |\Delta_{3,3}|^2 + |\Delta_{2,2}|^2 + |\Delta_{1,1}|^2 + |\Delta_{1,2}|^2   \\
		& = \vert\Delta_1^{(3)}\vert^2 + \vert \Delta_1^{(2)}\vert^2 + \vert\Delta_1^{(1)}\vert^2 + \frac{\vert \Delta_2^{(2)}\vert^2}{\vert \Delta_1^{(1)}\vert^2} + \frac{\vert \Delta_3^{(3)}\vert^2}{\vert \Delta_2^{(2)}\vert^2}\\
		& + \bigg\vert \frac{\Delta_1^{(1)}\Delta_2^{(3)} + \Delta_1^{(3)}\Delta_2^{(2)}}{\Delta_1^{(2)}\Delta_1^{(1)}} \bigg\vert^2 \\
		& = e^{2t\zeta_1^{(3)}}+ e^{2t\zeta_1^{(2)}}+ e^{2t\zeta_1^{(1)}} +e^{2t(\zeta_2^{(2)}- \zeta_1^{(1)})}+e^{2t(\zeta_3^{(3)}- \zeta_2^{(2)})}\\
		& + \left(e^{2t(\zeta_2^{(3)}-\zeta_1^{(2)})}+e^{2t(\zeta_1^{(3)}+\zeta_2^{(2)}-\zeta_1^{(2)}-\zeta_1^{(1)})}\right. \\
		& \left.-2e^{t(\zeta_2^{(3)}-\zeta_1^{(1)}+\zeta_1^{(3)}+\zeta_2^{(2)}-2\zeta_1^{(2)})} \cos(\varphi_2^{(3)}-\varphi_1^{(3)})\right).
	\end{split}
\end{equation*}
For $t$ large, since $\lambda_i^{(k)}$ satisfy the interlacing inequalities, the dominant term on the left side is $e^{t\lambda_1^{(3)}}$. We will see in Section \ref{section 4} that if $\zeta \in \mathcal{C}$, the dominant term on the right side is $e^{2t\zeta_1^{(3)}} = \vert\Delta_1^{(3)}\vert^2$.
\end{example}

\section{Matrix factorizations and tropical Gelfand-Zeitlin}\label{section 4}

In this section, we rephrase the results of \cite{APS1}, who use planar networks, in the language of matrix factorizations. 

\subsection{Matrix factorizations}

Let $E_{ij}$ denote the matrix with $(i,j)$ entry equal to 1 and all other entries 0. For a complex number $z$, denote $e_i(z):=\exp(zE_{i,i+1})$. Any word $\mathbf{i} = (i_1,\cdots ,i_k)$ in the alphabet $\{1, \ldots , n-1\}$ defines a map 
\[
	\bm{z}_{\mathbf{i}}\colon \mathbb{C}^{k} \to N, (z_1,\dots,z_k)\mapsto  e_{i_1}(z_1)\cdots e_{i_k}(z_k),
\]
where $N\subseteq {\rm GL}_n$ is the subgroup of upper triangular unipotent matrices. We also define 
\[
	\bm{a} \colon (\mathbb{R}_{+})^{n} \to A,\, (x_1,\dots,x_n) \mapsto \diag(x_1,\dots,x_n),
\]
where $A\subseteq {\rm GL}_n$ is the subgroup of diagonal matrices with positive real diagonal entries. 

Recall that the Weyl group of ${\rm GL}_n$ is $S_n$ generated by simple reflections $s_1,\dots,s_{n-1}$. The \emph{length} $l(w)$ of $w\in S_n$ is the smallest integer $k$ such that $w$ can be written as the product of $k$ simple reflections. A word $\mathbf{i}=(i_1,\dots,i_k)$ is \emph{reduced} if $l(w)=k$, where $w=s_{i_1}\cdots s_{i_k}$. Let $w_0$ be the longest element of $S_n$ with length $l(w_0)=m$. The \emph{standard} reduced word for $w_0$ is
\[
	\mathbf{i}_0=(1,\dots,n-1,1,\dots,n-2,\dots,1,2,1).
\]
For any reduced word $\mathbf{i}$ of $w_0$, the map, or \emph{matrix factorization},
\[
	\bm{a}\bm{z}_{\mathbf{i}}\colon (\mathbb{R}_{+})^{n}\times \mathbb{C}^{m} \to AN,\, \bm{a}\bm{z}_{\mathbf{i}}(x,z) = \bm{a}(x_1, \ldots, x_n)\bm{z}_{\mathbf{i}}(z_1, \ldots, z_m)
\]
defines a chart, when restricted to $(\mathbb{R}_{+})^{n}\times(\mathbb{C}^{\times})^{m}$. Here we write $(x,z)$ as shorthand for the tuple $(x_1, \ldots , x_n,z_1, \ldots , z_m)$. When $\mathbf{i}$ is the standard the word $\mathbf{i}_0$, we write $\bm{a}\bm{z}_0$ for simplicity.

\begin{example}\label{GL3}
	For ${\rm GL}_3$ and the standard reduced word $\mathbf{i}_0=(1,2,1)$, we have:
	\begin{align*}
		\bm{a}\bm{z}_{0}(x,z) & =
		\begin{pmatrix}
			x_1 & 0 & 0 \\
			0 & x_2 & 0 \\
			0 & 0 & x_3
		\end{pmatrix}
		\begin{pmatrix}
			1 & z_1 & 0 \\
			0 & 1 & 0 \\
			0 & 0 & 1
		\end{pmatrix}
		\begin{pmatrix}
			1 & 0 & 0 \\
			0 & 1 & z_2  \\
			0 & 0 & 1
		\end{pmatrix}
		\begin{pmatrix}
			1 & z_3 & 0 \\
			0 & 1 & 0 \\
			0 & 0 & 1
		\end{pmatrix}\\
		&=
		\begin{pmatrix}
			x_1 & x_1(z_1+z_3) & x_1z_1z_2 \\
			0 & x_2 & x_2z_2 \\
			0 & 0  & x_3
		\end{pmatrix}.
	\end{align*}
\end{example}  

\begin{remark} More generally, if $G$ is a reductive group with a choice of positive roots, and $\mathbf{i} = (i_1, \cdots ,i_m)$ is a reduced word for the longest element $w_0$ of the Weyl group, one can define the maps $e_i(z)$ using a Chevalley basis, and a chart $\bm{a}\bm{z}_{\mathbf{i}}$ as above. See \cite{BZ}.
\end{remark}

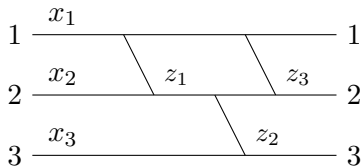
\begin{figure}[h]
\[
	\begin{tikzpicture}[baseline={([yshift=-.5ex]current bounding box.center)}, scale=1.6]
		\draw (-0.5,2)node[left]{$3$}--(2,2)node[right]{$3$};
		\draw (-0.5,2.5)node[left]{$2$}--(2,2.5)node[right]{$2$};
		\draw (-0.5,3)node[left]{$1$}--(2,3)node[right]{$1$};

		\draw (0.25,3)--(0.5,2.5) node[above right, font=\small]{$z_1$};
		\draw (1.25,3)--(1.5,2.5) node[above right, font=\small]{$z_3$};
		\draw (1,2.5)--(1.25,2) node[above right, font=\small]{$z_2$};

		\node at (-0.25, 3.15) {$x_1$};
		\node at (-0.25, 2.65) {$x_2$};
		\node at (-0.25, 2.15) {$x_3$};
	\end{tikzpicture}
\]  
\caption{The planar network representing the matrix factorization in Example \ref{GL3}.}
\label{fig 3}
\end{figure}

Matrix factorization coordinates on $AN\subseteq {\rm GL}_n$ can be represented by planar networks; weighted planar graphs, oriented left to right, with $n$ ``source'' vertices on the left, and $n$ ``sink'' vertices on the right (see \cite{APS1} for more details).  The matrix factorization  $\bm{az}_0$ is represented by the \emph{standard planar network}, $\Gamma$, with $n$ horizontal edges connecting the sources to the sinks (both labelled by $1,\ldots ,n$ as in Figure \ref{fig 3}), and $n(n-1)/2$ non-horizontal edges, arranged as in Figure \ref{fig 3} for the case $n=3$.  The non-horizontal edges (which correspond to $e_i(z)$'s in the matrix factorization) are labelled with the $z$'s and the horizontal edges are labelled at the sources with the $x$'s, as in Figure \ref{fig 3}.  The $(i,j)$ entry of $\bm{az}_0(x,z)$ equals
\[
    \sum_{\gamma \in P\Gamma(i,j)}\prod_{e\in \gamma} w(e), 
\]
where $P\Gamma(i,j)$ is the set of directed paths in $\Gamma$ from source $i$ to sink $j$, $e$ is an edge contained in $\gamma$, and $w(e)$ is the weight assigned to $e$ (weights not written on $\Gamma$ are the multiplicative identity).

The planar network representation of a matrix factorization gives minors of $\bm{az}_{\mathbf{i}}(x,z)$ a combinatorial interpretation. Recall from the previous section that $\Delta_{I,J}$ denotes the minor with rows $I$ and columns $J$, $|I| = |J| = i$. By the Lindstr\"om Lemma \cite{AD,FZ2},  
\[
    \Delta_{I,J}(\bm{az}_{\mathbf{i}}(x,z)) = \sum_{\gamma\in P\Gamma(I,J)}\prod_{e\in \gamma}w(e),
\]
where $P\Gamma(I,J)$ is the set of $i$-\emph{multipaths} from $I$ to $J$; a union of $i$ disjoint directed paths with sources in $I$ and sinks in $J$. For the minors $\Delta_i^{(k)}$ there is exactly one such multipath (see Figure \ref{fig 4}). 

\begin{example}\label{example: minors} Continuing from the previous example,  
\[ 
	\Delta_2^{(3)}(\bm{a}\bm{z}_{0}(x,z)) = x_1x_2z_1z_3,\, \Delta_{1,2}(\bm{a}\bm{z}_{0}(x,z)) = x_1z_1 + x_1z_3. 
\]

\end{example} 

As illustrated by the example,

\begin{theorem}\label{theorem: minors are positive}\cite[Theorem 5.8]{BZ}
    For any reduced word $\mathbf{i}$,  $\Delta_{I,J}(\bm{a}\bm{z}_{\mathbf{i}}(x,z))$ is a polynomial in the coordinates $x,z$ with positive coefficients. Moreover, the minors $\Delta_{i}^{(k)}(\bm{a}\bm{z}_{\mathbf{i}}(x,z))$ are all monomials.
\end{theorem}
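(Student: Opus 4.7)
The plan is to prove both claims via the planar network representation of $\bm{a}\bm{z}_{\mathbf{i}}$ combined with the Lindström Lemma stated in the excerpt. For any reduced word $\mathbf{i}$, the factorization yields a planar network $\Gamma_{\mathbf{i}}$ generalizing Figure \ref{fig 3}: horizontal edges carry the weights $x_i$ at the source end, non-horizontal crossings carry the weights $z_j$, and all other edges have weight $1$. By the Lindström Lemma,
$$\Delta_{I,J}\bigl(\bm{a}\bm{z}_{\mathbf{i}}(x,z)\bigr) = \sum_{\gamma\in P\Gamma(I,J)} \prod_{e\in \gamma} w(e),$$
and each summand is a monomial in $x,z$ with coefficient $+1$. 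This already proves the first assertion: $\Delta_{I,J}$ is a polynomial with non-negative integer coefficients. (One could equivalently expand the minor of the product $\bm{a}(x)\prod_j e_{i_j}(z_j)$ directly by Cauchy--Binet, since each factor has entries that are monomials with non-negative coefficients in $x,z$.)

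For the second assertion, the goal is to show that $P\Gamma(I,J)$ is a singleton when $I = \{n-k+1,\ldots,n-k+i\}$ and $J = \{n-i+1,\ldots,n\}$. A standard fact about non-intersecting multipaths in planar networks is that the induced bijection between sources and sinks must be order-preserving, so source $n-k+j$ is forced to be paired with sink $n-i+j$ for each $1 \le j \le i$. It then remains to check that, given this pairing, the geometric multipath realizing it inside $\Gamma_{\mathbf{i}}$ is unique.

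For the uniqueness step, I would appeal to Tits' theorem: any two reduced words for $w_0$ are linked by a sequence of braid moves, each of which induces a local modification of the planar network preserving every minor (this is the invariance principle underlying the total positivity arguments of \cite{BZ}). It therefore suffices to verify the claim for the standard word $\mathbf{i}_0$, where direct inspection shows that the multipath from $I$ to $J$ traces a unique staircase pattern in the lower-left region of the network, yielding the explicit monomial. The main obstacle is the bookkeeping of braid moves; a more conceptual alternative is to invoke the Chamber Ansatz of \cite{BZ}, which expresses each $\Delta_i^{(k)}$ as an explicit monomial in the factorization coordinates uniformly for every reduced word, simultaneously giving monomiality and a closed formula.
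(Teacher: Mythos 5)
The paper offers no proof of this statement; it is quoted directly from \cite[Theorem 5.8]{BZ} and used as a black box (with the remark that the cited result is considerably more general), so the relevant comparison is between your argument and the literature.

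Your proof of the first claim (positivity) is correct and standard: the planar network associated to $\mathbf{i}$ together with the Lindstr\"om Lemma expresses $\Delta_{I,J}$ as a subtraction-free sum of monomials over vertex-disjoint multipaths. The argument for the second claim (monomiality), however, breaks at the Tits' theorem step, and in fact the monomiality claim is \emph{false} for general reduced words. A braid move $s_i s_{i+1} s_i \leftrightarrow s_{i+1} s_i s_{i+1}$ transforms the factorization coordinates by a birational (not monomial) map, e.g.\ $e_1(a)e_2(b)e_1(c) = e_2\bigl(\tfrac{bc}{a+c}\bigr)e_1(a+c)e_2\bigl(\tfrac{ab}{a+c}\bigr)$; it does preserve the minors as functions on $AN$, but that does not transfer monomiality of their coordinate expressions from one chart to another. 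Concretely, for $n=3$ and $\mathbf{i}=(2,1,2)$ one computes
\[
\bm{a}\bm{z}_{\mathbf{i}}(x,z) = \begin{pmatrix} x_1 & x_1 z_2 & x_1 z_2 z_3 \\ 0 & x_2 & x_2(z_1+z_3) \\ 0 & 0 & x_3 \end{pmatrix},
\]
so $\Delta_1^{(2)} = x_2(z_1+z_3)$ is a binomial. The point is that the fixed collection $\Delta_i^{(k)}$ is the chamber-minor cluster of the \emph{standard} word $\mathbf{i}_0$; for a different reduced word the Chamber Ansatz produces a \emph{different} cluster of (generalized) minors, and it is those, not the $\Delta_i^{(k)}$, that are monomials in the $\bm{a}\bm{z}_{\mathbf{i}}$-chart. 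Your Chamber Ansatz alternative therefore does not "give monomiality uniformly for every reduced word" in the sense you intend. The paper only ever uses the monomiality statement for $\bm{a}\bm{z}_0$ (see Eq.~\eqref{eq: lindstrom minors}), and for $\mathbf{i}_0$ your direct staircase/uniqueness argument in the standard planar network is correct; that single-word argument is the one you should keep, and the Tits'-theorem reduction should be dropped.
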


One should note that \cite[Theorem 5.8]{BZ} is considerably more general.


\begin{figure}[h]
\centering
\begin{minipage}[b]{.3\linewidth}
\centering
\[
	\begin{tikzpicture}[baseline={([yshift=-.5ex]current bounding box.center)}, scale=1.6]
		\draw (0,2)--(2,2);
		\draw (0,2.5)--(2,2.5);
		\draw (0,3)--(2,3);

		\draw (0.25,3)--(0.5,2.5);

		\draw[line width=.5mm] (0,3)--(1.25,3)--(1.5,2.5)--(2,2.5);
		\draw[line width=.5mm] (0,2.5)--(1,2.5)--(1.25,2)--(2,2);
		\node at (0,1.9) {};
	\end{tikzpicture}
\]
\subcaption{The minor $\Delta_2^{(3)}$}
\end{minipage}
\begin{minipage}[b]{.6\linewidth}
\centering
\[
	\begin{tikzpicture}[baseline={([yshift=-.5ex]current bounding box.center)}, scale=1.6]
		\draw (0,2)--(2,2);
		\draw (0,2.5)--(2,2.5);
		\draw (0,3)--(2,3);

		\draw (1.25,3)--(1.5,2.5);
		\draw (1,2.5)--(1.25,2);

		\draw[line width=.5mm] (0,3)--(0.25,3)--(0.5,2.5)--(2,2.5);
		\node at (0,1.85) {};
	\end{tikzpicture}\quad +\quad
	\begin{tikzpicture}[baseline={([yshift=-.5ex]current bounding box.center)}, scale=1.6]
		\draw (0,2)--(2,2);
		\draw (0,2.5)--(2,2.5);
		\draw (0,3)--(2,3);
		
		\draw (0.25,3)--(0.5,2.5);
		\draw (1,2.5)--(1.25,2);

		\draw[line width=.5mm] (0,3)--(1.25,3)--(1.5,2.5)--(2,2.5);
		
		\node at (0,1.85){};
	\end{tikzpicture}
\]
\subcaption{The minor $\Delta_{1,2}^{}$}
\end{minipage}
\caption{Multipaths appearing in the minors of Example \ref{example: minors}.}
\label{fig 4}
\end{figure}
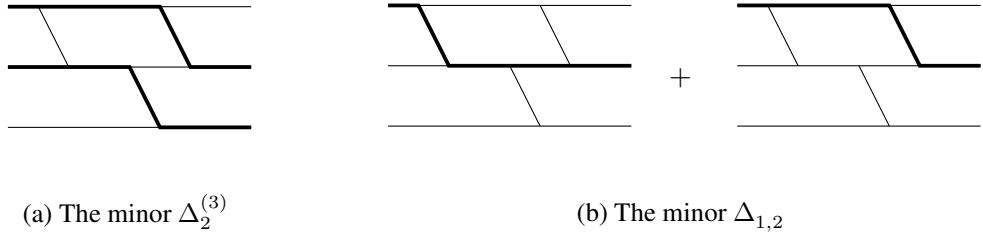

\subsection{Tropical Gelfand-Zeitlin functions}

A polynomial $p \in \mathbb{C}[x_1,\ldots,x_n]$ is \emph{positive} if all the coefficients are positive.  The \emph{tropicalization} of a positive polynomial $p(x) = \sum_{\bm{i}} c_{\bm{i}}x_1^{\alpha_{i_1}}\cdots x_n^{\alpha_{i_n}}$ is the piecewise linear function 
\[
	p^{\mathbb{T}}(w) := \max_{\bm{i}}\left\{ w_1\alpha_{i_1} + \cdots + w_n\alpha_{i_n}\right\}.
\]
If the variables $x$ are all real then, substituting $x_i = e^{tw_i}$, this equals the limit
\[
    \lim_{t\to \infty} \frac{1}{t}\ln\left(p(e^{tw})  \right).
\]
If the variables  are complex, then we may still define $p^{\mathbb{T}}$ as above, and -- on an open dense set -- this equals the function obtained by substituting $x = e^{tw+\sqrt{-1}\phi}$ and evaluating the limit as before (the limit does not equal $p^{\mathbb{T}}$ on the subset where leading terms cancel due to their complex arguments).

By Theorem \ref{theorem: minors are positive}, the minors $\Delta_{I,J}$ are positive polynomials in matrix factorization coordinates $\bm{a}\bm{z}_{\mathbf{i}}$. Thus, for all $1\leq i\leq k \leq n$, the polynomials 
\begin{equation}\label{eq: polynomials}
	p_i^{(k)}(x,z) = \sum_{\substack{|I|=|J| = i\\ I,J \subseteq \{ n-k+1,\ldots n\}}} |\Delta_{I,J}(\bm{a}\bm{z}_{\mathbf{i}}(x,z))|^2
\end{equation}
(which are simply the right sides of equations \eqref{eq: master equation 1}) are positive and can be tropicalized.  Since the $p_i^{(k)}(x,z)$ are related, by Equation \eqref{eq: master equation 1}, to the logarithmic Gelfand-Zeitlin functions on $AN$, their tropicalizations are called the \emph{tropical Gelfand-Zeitlin functions} and denoted $m_i^{(k)}$. One should note that our definition of $m_i^{(k)}$ differs from the definition of $m_i^{(k)}$ in \cite{APS2} by a factor of $2$.

\begin{theorem}\cite[Theorem 2]{APS1} 
For any choice of matrix factorization coordinates $\bm{a}\bm{z}_{\bf{i}}$, the tropical Gelfand-Zeitlin functions satisfy the rhombus inequalities
\begin{equation}\label{eq: tropical rhombus inequalities}
	m_i^{(k+1)}+m^{(k)}_{i-1} \geq m_{i-1}^{(k+1)}+m^{(k)}_{i}, \mbox{ and } m_i^{(k+1)}+m^{(k)}_{i} \geq m_{i+1}^{(k+1)}+m^{(k)}_{i-1}.
\end{equation}
\end{theorem}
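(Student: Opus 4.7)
The plan is to prove the rhombus inequalities through a Lindstr\"om-Gessel-Viennot (LGV)-style exchange argument on the planar network associated to the matrix factorization $\bm{a}\bm{z}_{\mathbf{i}}$.

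First I would unpack $m_i^{(k)}$ combinatorially. By the Lindstr\"om Lemma, each minor $\Delta_{I,J}(\bm{a}\bm{z}_{\mathbf{i}}(x,z))$ is a positive polynomial whose monomials are indexed by multipaths $\gamma \in P\Gamma(I,J)$, with coefficient equal to the product of edge weights $w(\gamma)=\prod_{e\in\gamma}w(e)$. Hence $|\Delta_{I,J}|^2$ is a positive polynomial whose monomials are indexed by ordered pairs of multipaths $(\gamma,\gamma')\in P\Gamma(I,J)^2$. Substituting $x_i = e^{tw_i}$ and $z_j = e^{tw_j+\sqrt{-1}\phi_j}$ into \eqref{eq: polynomials} and tropicalizing shows that $m_i^{(k)}(w,\phi)$ equals the maximum of the tropical total edge-weight $w(\gamma)+w(\gamma')$ over triples $(I,J,(\gamma,\gamma'))$ with $I,J\subseteq\{n-k+1,\ldots,n\}$, $|I|=|J|=i$, and $\gamma,\gamma'\in P\Gamma(I,J)$.

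Next I would reduce the first rhombus inequality to a combinatorial swap. Fix optimal configurations $(\gamma_1,\gamma_1')$ contributing to $m_{i-1}^{(k+1)}$ and $(\gamma_2,\gamma_2')$ contributing to $m_i^{(k)}$; their combined tropical weight is the right-hand side of the inequality. The claim is that these configurations can be redistributed into configurations $(\tilde\gamma_1,\tilde\gamma_1')$ valid for $m_i^{(k+1)}$ and $(\tilde\gamma_2,\tilde\gamma_2')$ valid for $m_{i-1}^{(k)}$, with the same total tropical edge weight. Superimpose $\gamma_1$ (an $(i-1)$-multipath in rows $\{n-k,\ldots,n\}$) and $\gamma_2$ (an $i$-multipath in rows $\{n-k+1,\ldots,n\}$) as a union of $2i-1$ internally disjoint directed paths through $\Gamma_{\mathbf{i}}$. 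Using the LGV crossing-exchange trick, cut and re-splice at mutual crossings to redistribute the paths between an $i$-multipath $\tilde\gamma_1$ supported on rows $\{n-k,\ldots,n\}$ and an $(i-1)$-multipath $\tilde\gamma_2$ supported on rows $\{n-k+1,\ldots,n\}$. Because the exchange only reshuffles the same edges, the total tropical weight is preserved. Perform the same operation on the conjugate pair $(\gamma_1',\gamma_2')$, and take the maximum over all initial configurations to conclude. The second rhombus inequality follows by the symmetric argument in which sink columns play the role of source rows.

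The main obstacle is the rigorous execution of the exchange: one must verify that the swapped paths are genuine multipaths (internally disjoint, with the correct prescribed source and sink sets), not just arbitrary edge-collections, and that this can be done uniformly in the reduced word $\mathbf{i}$, since the planar network $\Gamma_{\mathbf{i}}$ varies. For the standard word $\mathbf{i}_0$ the exchange is transparent from the picture in Figure \ref{fig 3}; for a general reduced word one reduces to this case via braid/commutation moves, which correspond to local moves on the planar network that preserve the combinatorics of multipaths. Making this uniform and carefully bookkeeping the cases is the nontrivial content of \cite[Theorem 2]{APS1}.
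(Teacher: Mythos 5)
Your approach --- a Lindstr\"om--Gessel--Viennot exchange of multipaths --- is genuinely different from the proof in \cite{APS1}, which is by \emph{detropicalization} rather than by combinatorics. By construction, $m_i^{(k)}$ is the tropicalization of the positive Laurent polynomial $p_i^{(k)}$, so for real positive variables ($\phi=0$) one has $m_i^{(k)}(w)=\lim_{t\to\infty}\frac{1}{t}\ln p_i^{(k)}(e^{tw})$, and by Cauchy--Binet $p_i^{(k)}(e^{tw})$ is the $i$-th elementary symmetric polynomial in the singular values $\lambda_j^{(k)}(b_tb_t^*)$ of $b_t=\bm{a}\bm{z}_{\mathbf{i}}(e^{tw})$; since that elementary symmetric polynomial agrees with $\prod_{j\le i}\lambda_j^{(k)}(b_tb_t^*)$ up to a bounded multiplicative factor, $m_i^{(k)}(w)=\lim_{t\to\infty}\frac1t\,\ell_i^{(k)}(b_tb_t^*)$ (this is the content of Proposition \ref{proposition 4.1}). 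For every fixed $t$, the $\lambda_j^{(k)}(b_tb_t^*)$ satisfy the classical Cauchy interlacing inequalities for nested principal Hermitian submatrices, hence the $\ell_i^{(k)}(b_tb_t^*)$ satisfy the rhombus inequalities \eqref{eq: rhombus inequalities}; passing to the limit preserves the non-strict inequalities, giving \eqref{eq: tropical rhombus inequalities}. This is a few lines, uses no planar-network combinatorics beyond positivity, and is manifestly uniform in the reduced word $\mathbf{i}$. Your route, if it works, buys a purely combinatorial proof and an explicit ``swap'' witness for each rhombus inequality; the paper's route buys brevity and word-independence.

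There is also a concrete gap in your proposal: the reduction to the standard word. Braid and commutation moves do \emph{not} ``preserve the combinatorics of multipaths'' in the required sense. They change the chart $(x,z)\mapsto\bm{a}\bm{z}_{\mathbf{i}}(x,z)$, hence change the polynomial $p_i^{(k)}(x,z)$ and its tropicalization $m_i^{(k)}$. The tropical Gelfand--Zeitlin functions genuinely depend on $\mathbf{i}$; this is precisely why \cite[Theorem 3]{APS1} singles out $\mathbf{i}_0$ --- for other reduced words the image of $L_{\mathbb{T}}$ is generally a proper subcone of $\overline{\mathcal{C}}$. So the exchange lemma has to be established directly for each network (or replaced by a word-independent argument). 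Two smaller points: $m_i^{(k)}$ depends on $w$ only, not on $\phi$ (tropicalization of a positive Laurent polynomial ignores phases, and you may set $\phi=0$ throughout); and the combinatorial content you would need is precisely the statement that, for a planar network with all diagonal edges oriented from row $r$ to row $r+1$, an optimal $(i{-}1)$-multipath in rows $\{n-k,\ldots,n\}$ together with an optimal $i$-multipath in rows $\{n-k+1,\ldots,n\}$ can be repartitioned edge-by-edge into an $i$-multipath in the larger row set and an $(i{-}1)$-multipath in the smaller one. Planarity and the downward orientation make this plausible --- it is a hive/rhombus-concavity statement --- but it is not a one-step swap and you would need to prove it carefully, not appeal to reduction.
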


Taken together, the tropical Gelfand-Zeitlin functions define a piecewise linear map, called the \emph{tropical Gelfand-Zeitlin map},
\[
    L_{\mathbb{T}}\colon \mathbb{R}^{n+m} \to \mathbb{R}^{n+m},\, L_{\mathbb{T}}(w) = (m_i^{(k)}(w))_{i,k}.
\]

\begin{theorem}\cite[Theorem 3]{APS1}
    If $L_{\mathbb{T}}$ is defined using the standard reduced word $\mathbf{i}_0$, then ${\rm Im}(L_{\mathbb{T}}) = \overline{\mathcal{C}}$ (the closure of $\mathcal{C}$).   
\end{theorem}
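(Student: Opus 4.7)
My plan is as follows. The inclusion $\text{Im}(L_{\mathbb{T}}) \subseteq \overline{\mathcal{C}}$ is immediate from the preceding theorem: every image point satisfies the rhombus inequalities, hence lies in $\overline{\mathcal{C}}$. The content therefore lies in the reverse inclusion, for which I will construct an explicit continuous section $S \colon \overline{\mathcal{C}} \to \mathbb{R}^{n+m}$ satisfying $L_{\mathbb{T}} \circ S = {\rm id}_{\overline{\mathcal{C}}}$.

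The section $S$ will come directly from Theorem \ref{theorem: minors are positive}: for the standard reduced word $\mathbf{i}_0$, each principal minor $\Delta_i^{(k)}(\bm{a}\bm{z}_0(x,z))$ is a \emph{monomial} in $(x,z)$. Writing $x_j = e^{tw_j}$ and $|z_l| = e^{tv_l}$, the functions $T_i^{(k)}(w,v) := {\rm trop}(\ln|\Delta_i^{(k)}|)(w,v)$ are therefore linear, and I would assemble them into a single linear map $T \colon \mathbb{R}^{n+m} \to \mathbb{R}^{n+m}$. This map is a bijection because the composition $(x,z) \mapsto (\Delta_i^{(k)})_{i,k}$ is a coordinate chart on the open dense locus where no principal minor vanishes; passing to log-modulus coordinates, this forces the exponent matrix of $T$ to be invertible. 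Define $S(\ell) := T^{-1}(\ell/2)$.

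Next, I would verify $L_{\mathbb{T}}(S(\ell)) = \ell$ for $\ell$ in the strict interior $\mathcal{C}$. By construction, the term coming from $|\Delta_i^{(k)}|^2$ in $p_i^{(k)}$ contributes exactly $2T_i^{(k)}(S(\ell)) = \ell_i^{(k)}$ after tropicalization, so $m_i^{(k)}(S(\ell)) \geq \ell_i^{(k)}$ automatically. The equality $m_i^{(k)}(S(\ell)) = \ell_i^{(k)}$ therefore reduces to the following \emph{dominance statement}: for every pair $(I,J)$ with $|I|=|J|=i$ and $I,J \subseteq \{n-k+1,\ldots,n\}$, and every multipath $\gamma \in P\Gamma(I,J)$ in the standard planar network, the tropicalized weight of $\gamma$ evaluated at $S(\ell)$ is at most $\ell_i^{(k)}/2$ whenever $\ell$ satisfies the strict rhombus inequalities. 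Once this is established on $\mathcal{C}$, continuity of $L_{\mathbb{T}}$ and $S$ immediately extends $L_{\mathbb{T}} \circ S = {\rm id}$ to the entire closure $\overline{\mathcal{C}}$.

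The dominance statement is where I expect the main obstacle. My preferred approach is combinatorial: any admissible multipath $\gamma$ contributing to $\Delta_{I,J}$ can be transformed into the unique multipath $\gamma_i^{(k)}$ computing $\Delta_i^{(k)}$ by a finite sequence of local ``shifting moves'' on the standard planar network, and I would verify that each individual move changes the tropical weight by a quantity of the form $({\rm trop\ LHS}) - ({\rm trop\ RHS})$ of one of the rhombus inequalities \eqref{eq: tropical rhombus inequalities}, and is therefore nonpositive by hypothesis. An alternative, perhaps cleaner, route is to use the generalized Pl\"ucker expansions \eqref{laurent polys} to write each $|\Delta_{I,J}|^2$ as a signed Laurent polynomial in the $|\Delta_i^{(k)}|^2$, then compare the tropicalization of each Laurent monomial with $\ell_i^{(k)}$ directly by repeated application of the rhombus inequalities, as foreshadowed in Example \ref{example: poly}. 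Either route reduces the geometric statement to a finite list of explicit piecewise-linear inequalities controlled by the rhombus inequalities defining $\mathcal{C}$.
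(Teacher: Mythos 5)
The paper does not reprove this statement; it is cited directly from \cite{APS1}, so there is no in-paper proof to compare against. Judged on its own merits, your strategy is sound and matches the architecture the present paper alludes to: the chamber $\mathcal{C}_{\mathbb{T}}$ (see the discussion around Equation \eqref{eq: lindstrom minors}) is precisely your $S(\mathcal{C})$, and your observation that $T$ is a linear bijection because the minor monomial map $(x,z)\mapsto(\Delta_i^{(k)})$ is a coordinate chart is correct. The inclusion ${\rm Im}(L_{\mathbb{T}})\subseteq\overline{\mathcal{C}}$, the construction $S=T^{-1}(\cdot/2)$, the automatic lower bound $m_i^{(k)}(S(\ell))\geq\ell_i^{(k)}$, and the density/continuity extension from $\mathcal{C}$ to $\overline{\mathcal{C}}$ are all correct.

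However, the proof has a genuine gap: the \emph{dominance statement} is stated but not proved, and it is the entire technical content of the theorem. You correctly identify that one must show, for $\ell$ satisfying the strict rhombus inequalities and $w=S(\ell)$, that every multipath $\gamma\in P\Gamma(I,J)$ (with $I,J\subseteq\{n-k+1,\ldots,n\}$, $|I|=|J|=i$) has tropical weight at most $T_i^{(k)}(w)$. Your first route (local ``shifting moves'' each witnessed by a single rhombus inequality) is the right idea, but it is nontrivial: the target multipath $\gamma_i^{(k)}$ has a fixed source set $\{n-k+1,\ldots,n-k+i\}$ and sink set $\{n-i+1,\ldots,n\}$, while $\gamma$ has arbitrary $I,J$ inside $\{n-k+1,\ldots,n\}$, so the moves must deform sources and sinks as well as interior crossings. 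One needs to exhibit an explicit move set, prove connectivity, and match each move to a rhombus difference — none of which is carried out here. Your second route (Pl\"ucker expansions) has an additional flaw: as Example \ref{example: poly} shows, $P_{I,J}(\Delta_i^{(k)},\overline{\Delta}_i^{(k)})$ is not a positive Laurent polynomial — the cross terms carry a factor $-2\cos(\cdot)$ — so one cannot conclude the tropicalization of $|\Delta_{I,J}|^2$ is the maximum of the tropicalizations of the Laurent monomials; the cancellations must be controlled. The Lindstr\"om-Gessel-Viennot/planar-network route avoids this, but again you have only sketched it. In short: right frame, correct reduction, but the key combinatorial lemma remains an assertion.
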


As $L_{\mathbb{T}}$ is piecewise linear, $\mathbb{R}^{n+m}$ decomposes into polyhedral chambers where $L_{\mathbb{T}}$ is linear.  There is a unique chamber, which we denote by $\mathcal{C}_{\mathbb{T}}$, where the rank of $L_{\mathbb{T}}$ is $n+m$, and for this chamber, $L_{\mathbb{T}}(\mathcal{C}_{\mathbb{T}}) = \mathcal{C}$ \cite{APS1}. For all $\delta>0$, \cite{APS2} define $W^\delta$ to be the set of all $w\in \mathbb{R}^{n+m}$ such that,
\[
	m_i^{(k+1)}+m^{(k)}_{i-1} > m_{i-1}^{(k+1)}+m^{(k)}_{i} + \delta, \mbox{ and } m_i^{(k+1)}+m^{(k)}_{i} > m_{i+1}^{(k+1)}+m^{(k)}_{i-1} + \delta,
\]
and for any two disjoint subsets $\alpha,\beta \subseteq E\Gamma$ (the edge set of the standard planar network $\Gamma$ defined in Section \ref{section 3}),
\[
    |w(\alpha)-w(\beta)|>\delta.
\]
Let $\bm{a}\bm{z}_{\mathbf{i}}(tw,\phi)$ denote $\bm{a}\bm{z}_{\mathbf{i}}(x,z)$ with the substitutions $x_i = e^{tw_i}$, $z_j = e^{tw_j+\sqrt{-1}\phi_j}$. The condition $|w(\alpha)-w(\beta)|>\delta$ guarantees that there is only one leading term in the polynomials $p_i^{(k)}(\bm{a}\bm{z}_{\mathbf{i}}(tw,\phi))$, which means that the tropicalization $m_i^{(k)}$ is equal to the tropical limit described at the beginning of the section. This is the content of the following proposition.

\begin{proposition}\cite[Proposition 2]{APS1}\label{proposition 4.1}
    Fix $\delta>0$ and let $w \in W^{\delta}$. For any reduced word  $\mathbf{i}$ of $w_0$, there is a constant $C$ such that for all $t\geq 1$ and any $\phi \in T^m$, 
    \[
    	\left\vert 	m_i^{(k)}(w) - \frac{1}{t}\ln\left(\prod_{j=1}^i\lambda_{j}^{(k)}\Big(\bm{a}\bm{z}_{\mathbf{i}}(tw,\phi) \bm{a}\bm{z}_{\mathbf{i}}(tw,\phi)^*\Big)\right) \right\vert < Ce^{-t\delta},
    \]
    for all $1\leq i\leq k\leq n$.
\end{proposition}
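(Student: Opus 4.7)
The approach I would take combines a Cauchy-Binet identity with a tropical analysis of sums of squared minors, and then an inductive argument to pass from elementary symmetric functions of the eigenvalues to the product of the top $i$ eigenvalues.

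The first step is to apply Cauchy-Binet to $b^{(k)}$, the bottom-right $k\times k$ block of $b = \bm{a}\bm{z}_{\mathbf{i}}(tw,\phi)$, obtaining
\[
    e_i(\lambda_1^{(k)},\ldots,\lambda_k^{(k)}) \;=\; p_i^{(k)}(tw,\phi),
\]
where $e_i$ is the $i$-th elementary symmetric polynomial. The second step is to analyze the tropical behaviour of $p_i^{(k)}$. By Theorem \ref{theorem: minors are positive} and the Lindstr\"om lemma, each minor $\Delta_{I,J}$ is a positive sum over multipaths $\gamma$ with integer coefficients, so upon substituting $x_l = e^{tw_l}$ and $z_l = e^{tw_l + \sqrt{-1}\phi_l}$ and expanding $|\Delta_{I,J}|^2 = \Delta_{I,J}\overline{\Delta_{I,J}}$ as a double sum,
\[
    p_i^{(k)}(tw,\phi) \;=\; \sum_{I,J,\gamma,\gamma'} e^{t(w(\gamma)+w(\gamma'))}\,e^{\sqrt{-1}(\phi(\gamma)-\phi(\gamma'))}.
\]
The disjoint-subset condition in $W^{\delta}$ forces the weights of any two distinct multipaths to be separated by more than $\delta$. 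Hence a unique pair $(\gamma^*,\gamma^*)$ achieves the maximum $2w(\gamma^*) = m_i^{(k)}(w)$, and all other pairs contribute exponents at most $m_i^{(k)}(w) - \delta$. Since the phase factors have modulus one, this gives, uniformly in $\phi$,
\[
    p_i^{(k)}(tw,\phi) \;=\; e^{tm_i^{(k)}(w)}\bigl(1 + O(e^{-t\delta})\bigr),
\]
so that $\tfrac{1}{t}\ln e_i(\lambda_1^{(k)},\ldots,\lambda_k^{(k)}) = m_i^{(k)}(w) + O(e^{-t\delta})$.

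The third step is to upgrade this estimate on $e_i^{(k)}$ to one on $\prod_{j=1}^i\lambda_j^{(k)}$, which I would do by double induction, outer on $k$ and inner on $i$. The boundary cases $k=1$ and $i=k$ are exact, since $\prod_{j\leq k}\lambda_j^{(k)} = |\det b^{(k)}|^2 = (\Delta_k^{(k)})^2 = e^{tm_k^{(k)}(w)}$. For the inductive step, I decompose
\[
    e_i^{(k)} \;=\; \prod_{j=1}^i\lambda_j^{(k)} \;+\; \sum_{I \neq \{1,\ldots,i\}}\prod_{j\in I}\lambda_j^{(k)},
\]
bound each error term by $\prod_{j=1}^{i-1}\lambda_j^{(k)}\cdot\lambda_{i+1}^{(k)}$, and estimate $\lambda_{i+1}^{(k)}$ via Cauchy's interlacing $\lambda_{i+1}^{(k)} \leq \lambda_i^{(k-1)}$ together with the outer inductive hypothesis at level $k-1$, which yields $\lambda_i^{(k-1)} = e^{t(m_i^{(k-1)} - m_{i-1}^{(k-1)})}(1+O(e^{-t\delta}))$. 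Combined with the inner inductive hypothesis for $\prod_{j\leq i-1}\lambda_j^{(k)}$ and the strict rhombus inequality $m_{i-1}^{(k)} + m_i^{(k-1)} - m_{i-1}^{(k-1)} < m_i^{(k)} - \delta$, valid on $W^{\delta}$, the total error is $O(e^{t(m_i^{(k)}(w) - \delta)})$, closing the induction.

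The principal obstacle is precisely this last step: the naive chain $\prod_{j\leq i}\lambda_j^{(k)} \leq e_i^{(k)} \leq \binom{k}{i}\prod_{j\leq i}\lambda_j^{(k)}$ loses only a constant factor, i.e.\ an $O(1/t)$ error in the logarithm, which is far weaker than the required $O(e^{-t\delta})$. Closing this gap demands a genuine spectral separation $\lambda_{i+1}^{(k)}/\lambda_i^{(k)} = O(e^{-t\delta})$, which is not visible from the eigenvalue side and must be extracted from the tropical rhombus inequalities through Cauchy's interlacing and the outer induction on $k$.
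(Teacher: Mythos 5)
This proposition is stated in the paper by citation to \cite[Proposition~2]{APS1}; the paper does not reproduce a proof, so a direct comparison is not possible. Evaluated on its own terms, your plan is sound and, given the way the paper repeatedly leans on planar-network combinatorics and the Lindstr\"om lemma when discussing \cite{APS1} (cf.\ Equations \eqref{eq: estimate 2} and \eqref{eq: lindstrom minors}), it is almost certainly the argument the source reference uses.

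Your three steps are correct, and you have diagnosed the real difficulty accurately. Step 2 works because, for two distinct multipaths $\gamma\neq\gamma'$, the sets $\gamma\setminus\gamma'$ and $\gamma'\setminus\gamma$ are disjoint subsets of $E\Gamma$ and $w(\gamma)-w(\gamma')=w(\gamma\setminus\gamma')-w(\gamma'\setminus\gamma)$, so the $W^\delta$ condition forces a spectral gap $>\delta$ among multipath weights; the leading pair then necessarily has $\gamma=\gamma'$ (hence phase~$1$), making the estimate uniform in $\phi$. For step 3 the base cases are genuinely exact: $i=k$ because $\Delta_k^{(k)}$ is a monomial in the real positive $x$-variables only, and $k=1$ trivially. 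The inductive step is the crux and you have the right mechanism: the error $e_i^{(k)}-\prod_{j\le i}\lambda_j^{(k)}$ is controlled by $\prod_{j<i}\lambda_j^{(k)}\cdot\lambda_{i+1}^{(k)}$, Cauchy interlacing gives $\lambda_{i+1}^{(k)}\le\lambda_i^{(k-1)}$, the outer hypothesis at level $k-1$ (applied at $i$ and $i-1$, then divided) gives $\lambda_i^{(k-1)}=e^{t(m_i^{(k-1)}-m_{i-1}^{(k-1)})}(1+O(e^{-t\delta}))$, and the rhombus inequality $m_i^{(k)}+m_{i-1}^{(k-1)}>m_{i-1}^{(k)}+m_i^{(k-1)}+\delta$ on $W^\delta$ then drives the relative error below $e^{-t\delta}$. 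Your remark that the naive bound $\prod_{j\le i}\lambda_j^{(k)}\le e_i^{(k)}\le\binom{k}{i}\prod_{j\le i}\lambda_j^{(k)}$ only yields an $O(1/t)$ discrepancy, and that genuine spectral separation is needed, is exactly the point that makes the proposition nontrivial. One small thing to make explicit when writing this up: the uniformity of the constant $C$ for $t\ge 1$ requires bounding $|\ln(1+r_t)|$ near $t=1$; this is handled by compactness of $[1,t_1]\times T^m$ (the functions involved are continuous there) together with the exponential estimate for $t\ge t_1$, and the final $C$ is the maximum over the finitely many pairs $(i,k)$.
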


Let $\mathcal{L}_t\colon AN \to \mathbb{R}^{m+n}$ denote the map with coordinates given by 
\[
	\frac{1}{t}\ln\left( \prod_{j=1}^i \lambda_j^{(k)}(bb^*)\right).
\]

\begin{proposition}\cite[Proposition 5]{APS1}\label{proposition 4.2}
	For every $\delta > 0$, there exists $t_0>0$ such that for all $t \geq t_0$, the following statement holds: for all $b \in \mathcal{L}_t^{-1}(\ell),$ $\ell \in \mathcal{C}^{\delta},$ there exists $w\in W^{\delta/2}$ and $\phi \in T^m$ such that 
	\[
	    b = \bm{a}\bm{z}_{0}(tw,\phi).
	\]
\end{proposition}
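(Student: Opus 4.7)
My plan is to exhibit the entire Lagrangian torus $\mathcal{L}_t^{-1}(\ell)$ as the image of the factorization chart $\bm{a}\bm{z}_0$ via a uniform implicit function theorem. First, I set $w_\ell := L_{\mathbb{T}}^{-1}(\ell) \in \mathcal{C}_{\mathbb{T}}$; this is well-defined since $L_{\mathbb{T}}$ restricts to an invertible linear map on the top-dimensional chamber $\mathcal{C}_{\mathbb{T}}$. The condition $\ell\in \mathcal{C}^\delta$ is, by definition of $L_{\mathbb{T}}$, exactly the strict rhombus inequalities on $m_i^{(k)}(w_\ell)$ with slack $\delta$; using the explicit monomial expressions for $\Delta_i^{(k)}(\bm{a}\bm{z}_0)$ from Theorem \ref{theorem: minors are positive}, a direct case check on the combinatorics of the staircase monomials further yields the edge-separation $|w_\ell(\alpha)-w_\ell(\beta)| > \delta'$ for some $\delta' > \delta/2$, so $w_\ell \in W^{\delta'}\subseteq W^{\delta/2}$ with room to spare.

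Next, I would apply Proposition \ref{proposition 4.1} to the $t$-family
\[
    F_t(w,\phi) := \mathcal{L}_t\bigl(\bm{a}\bm{z}_0(tw,\phi)\bigr), \qquad (w,\phi) \in W^{\delta/2}\times T^m,
\]
which gives $F_t(w,\phi) = L_{\mathbb{T}}(w) + O(e^{-t\delta/2})$ uniformly in $\phi$. Differentiating in $w$, the Jacobian $D_w F_t$ converges uniformly to the invertible linear map $DL_{\mathbb{T}}|_{\mathcal{C}_{\mathbb{T}}}$ as $t\to\infty$; applying the implicit function theorem uniformly in $\phi\in T^m$ (using the compactness of $T^m$) produces, for $t\geq t_0$ large enough, a unique smooth function $w\colon T^m \to \mathcal{C}_{\mathbb{T}}$ with $w(\phi) = w_\ell + O(e^{-t\delta/2})$ and $F_t(w(\phi),\phi) = \ell$. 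In particular $w(\phi)\in W^{\delta/2}$ for $t$ large, and we obtain a smooth map $\Phi_t\colon T^m \to \mathcal{L}_t^{-1}(\ell)$ defined by $\Phi_t(\phi) = \bm{a}\bm{z}_0(tw(\phi),\phi)$.

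Finally, I would conclude by a dimension/connectedness argument. The map $\Phi_t$ is injective because $\bm{a}\bm{z}_0$ restricts to a diffeomorphism on $(\mathbb{R}_+)^n\times(\mathbb{C}^\times)^m$, and it is an immersion by the implicit function theorem output, hence a local diffeomorphism onto the compact, connected Lagrangian torus $\mathcal{L}_t^{-1}(\ell)$ of the same dimension $m$ (Flaschka-Ratiu integrability \cite{FR}); an injective local diffeomorphism from a compact $m$-manifold to a connected compact $m$-manifold is a diffeomorphism. Thus $\Phi_t$ is surjective, so every $b\in\mathcal{L}_t^{-1}(\ell)$ has the required form $b = \bm{a}\bm{z}_0(tw(\phi),\phi)$ with $w(\phi)\in W^{\delta/2}$. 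The main obstacle I anticipate is the uniformity in the implicit function step: one must establish quantitative bounds on $D_w F_t$ and its inverse that are valid simultaneously for all $\phi\in T^m$ and $t\geq t_0$, which requires a $t$-differentiated refinement of the tropical estimate of Proposition \ref{proposition 4.1}; these uniform bounds are worked out in \cite{APS1}.
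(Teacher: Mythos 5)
This proposition is not proved in the paper at all: it is quoted from Alekseev--Podkopaeva--Szenes \cite{APS1}, so there is no in-house argument to compare your attempt against. Evaluated on its own terms, your architecture (tropical inverse $w_\ell=L_{\mathbb{T}}^{-1}(\ell)$, uniform implicit function theorem producing $w(\phi)$, clopen argument to cover the whole Flaschka--Ratiu torus) is reasonable, and the final surjectivity step is sound given the connectedness of $\mathcal{L}_t^{-1}(\ell)$, which does follow from Theorem~\ref{theorem ginzburg-weinstein map} by transporting the Gelfand--Zeitlin torus through~$\gamma$.

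The real gap is in your first step. The set $W^{\delta/2}$ is cut out by \emph{two} families of conditions: the $\tfrac{\delta}{2}$-strict rhombus inequalities on $m_i^{(k)}(w)$, which you correctly get for free because $m_i^{(k)}(w_\ell)=\ell_i^{(k)}$ on $\mathcal{C}_{\mathbb{T}}$, and the edge-separation inequalities $|w(\alpha)-w(\beta)|>\tfrac{\delta}{2}$ for disjoint $\alpha,\beta\subseteq E\Gamma$. You dispose of the latter with ``a direct case check on the combinatorics of the staircase monomials further yields the edge-separation $\ldots$ with room to spare,'' but this is precisely the non-trivial content of the cited result and, taken at face value with the paper's stated definition of $W^{\delta}$, it is \emph{false}. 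Already for $n=2$, the tropical inverse gives $w_1=(\lambda_1^{(2)}+\lambda_2^{(2)}-\lambda_1^{(1)})/2$, $w_2=\lambda_1^{(1)}/2$, so that $w_1-w_2=\bigl((\lambda_1^{(2)}-\lambda_1^{(1)})-(\lambda_1^{(1)}-\lambda_2^{(2)})\bigr)/2$, which vanishes whenever $\lambda_1^{(1)}$ sits at the midpoint of $[\lambda_2^{(2)},\lambda_1^{(2)}]$; such $\ell$ lies deep inside $\mathcal{C}^{\delta}$ yet $w_\ell$ sits on the edge-separation hyperplane $\{w_1=w_2\}$. The proof must therefore either use a more refined separation condition (restricted to edge-subsets that actually arise as symmetric differences of competing multipaths in the $p_i^{(k)}$'s, which is how \cite{APS2} would need to phrase it for the estimates to go through), or argue around it — but neither is a formality. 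You also, correctly, flag your second gap yourself: the implicit function step needs bounds on $D_wF_t$ and its inverse uniform over $\phi\in T^m$ and $t\geq t_0$, and Proposition~\ref{proposition 4.1} controls only the values of $\mathcal{L}_t\circ\bm{a}\bm{z}_0$, not its $w$-derivatives, so a differentiated tropical estimate is genuinely required.
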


We end this section with one last detail from \cite{APS1} that is crucial in the proof of Proposition \ref{proposition 5.1}. For all $w \in \mathcal{C}_{\mathbb{T}}$, the maximum in the definition of $m_i^{(k)}(w)$ is the sum corresponding to the monomial $\Delta_i^{(k)}(\bm{a}\bm{z}_0(w,\phi))$ (see also Appendix C of \cite{AD}). Thus, for all $w \in \mathcal{C}_{\mathbb{T}}$,
\begin{equation}\label{eq: lindstrom minors}
    m_i^{(k)}(w) = \frac{1}{t}\ln\vert \Delta_i^{(k)}(\bm{a}\bm{z}_0(tw,\phi))\vert^2.
\end{equation}

\section{Convergence of action coordinates}\label{section 5}

In this section we prove,
\begin{proposition}\label{proposition 5.1}
For all $\delta > 0$ and $1\leq i \leq k \leq n,$ there exists a constant $C$ and $t_0\geq 0$ such that $t\geq t_0$ implies 
	\begin{equation}	
		\bigg\vert 2\, \zeta_i^{(k)}\circ \Delta_t \circ {\rm gw}_t(A)- \ell_i^{(k)}(A)\bigg\vert < Ce^{-t\delta/2}
	\end{equation}
	for all $A \in \mathcal{H}^{\delta}_0(n)$.
\end{proposition}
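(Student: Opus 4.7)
The plan is to combine the key identity $\lambda_j^{(k)}(b_t b_t^*) = e^{t\lambda_j^{(k)}(A)}$ coming from property (i) of the Ginzburg-Weinstein map $\gamma$ with the matrix factorization estimates of Section \ref{section 4}. Setting $b_t = {\rm gw}_t(A) = h^{-1}(\gamma(tA))$, we have $b_t b_t^* = \gamma(tA)$, so
\[
\mathcal{L}_t(b_t)_i^{(k)} = \frac{1}{t}\ln \prod_{j=1}^i \lambda_j^{(k)}(b_t b_t^*) = \ell_i^{(k)}(A).
\]
The inequalities \eqref{eq: delta rhombus inequalities} defining $\mathcal{C}^{\delta}$ coincide with those characterizing $\mathcal{H}_0^\delta$ (after replacing $\zeta$ with $\ell$), so the hypothesis $A \in \mathcal{H}_0^{\delta}$ places $\mathcal{L}_t(b_t) \in \mathcal{C}^\delta$.

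Next I would invoke Proposition \ref{proposition 4.2}: for $t$ sufficiently large we may write $b_t = \bm{a}\bm{z}_0(tw,\phi)$ for some $w \in W^{\delta/2}$ and $\phi \in T^m$. The rhombus inequalities built into the definition of $W^{\delta/2}$ force $L_{\mathbb{T}}(w) \in \mathcal{C}^{\delta/2} \subseteq \mathcal{C}$, and the uniqueness of the full-rank chamber $\mathcal{C}_{\mathbb{T}}$ satisfying $L_{\mathbb{T}}(\mathcal{C}_{\mathbb{T}}) = \mathcal{C}$ then gives $w \in \mathcal{C}_{\mathbb{T}}$. Equation \eqref{eq: lindstrom minors} now applies and yields
\[
m_i^{(k)}(w) = \frac{1}{t}\ln |\Delta_i^{(k)}(\bm{a}\bm{z}_0(tw,\phi))|^2 = \frac{1}{t}\ln |\Delta_i^{(k)}(b_t)|^2 = 2\,\zeta_i^{(k)} \circ \Delta_t \circ {\rm gw}_t(A).
\]

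Finally, Proposition \ref{proposition 4.1} applied with $\delta/2$ in place of $\delta$ provides a constant $C$ such that
\[
\left| m_i^{(k)}(w) - \frac{1}{t}\ln \prod_{j=1}^i \lambda_j^{(k)}(b_t b_t^*) \right| < Ce^{-t\delta/2}.
\]
Substituting the identities $m_i^{(k)}(w) = 2\,\zeta_i^{(k)}\circ\Delta_t\circ{\rm gw}_t(A)$ and $\frac{1}{t}\ln\prod_{j=1}^i \lambda_j^{(k)}(b_t b_t^*) = \ell_i^{(k)}(A)$ into this inequality produces the required bound. The one point requiring care is verifying that $w \in W^{\delta/2}$ indeed lies in the distinguished chamber $\mathcal{C}_{\mathbb{T}}$ so that \eqref{eq: lindstrom minors} may be invoked; once this is established, the proof is essentially an assembly of the Ginzburg-Weinstein intertwining property \eqref{eq: AM} with the tropical estimates of Section \ref{section 4}.
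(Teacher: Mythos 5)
Your proof follows essentially the same route as the paper: both proofs use the intertwining property \eqref{eq: AM} of $\gamma$ to identify $\mathcal{L}_t(b_t)$ with $L(A)$, invoke Proposition~\ref{proposition 4.2} to factor $b_t = \bm{a}\bm{z}_0(tw,\phi)$ with $w\in W^{\delta/2}$, apply \eqref{eq: lindstrom minors} to replace $m_i^{(k)}(w)$ by $\frac{1}{t}\ln|\Delta_i^{(k)}(b_t)|^2$, and conclude with Proposition~\ref{proposition 4.1}. You make the identity $\mathcal{L}_t(b_t)=L(A)$ more explicit than the paper does, which is a plus. One remark on the step you yourself flag: the inference ``$L_{\mathbb{T}}(w)\in\mathcal{C}$ plus uniqueness of the full-rank chamber implies $w\in\mathcal{C}_{\mathbb{T}}$'' is not airtight as stated, since for a piecewise-linear map the image of a lower-rank chamber need not be disjoint from $\mathcal{C}$ without a further argument. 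The reason $w\in\mathcal{C}_{\mathbb{T}}$ actually follows from membership in $W^{\delta/2}$ is the second defining condition of $W^{\delta}$, namely $|w(\alpha)-w(\beta)|>\delta$ for disjoint edge subsets, which forces a unique dominant multipath in every minor and hence places $w$ in the interior of a single linearity chamber (the one with full rank, by the rhombus inequalities); this is the content of the cited results of \cite{APS1,APS2}. The paper uses this inclusion silently, so both arguments are at the same level of rigor here, but you should cite the separation condition rather than rely on the chamber-uniqueness reasoning.
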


\begin{proof} Let $\delta>0$, and fix $A\in \mathcal{H}^{\delta}_0(n)$. Denote ${\rm gw}_t(A) = b_t$. By definition of $\zeta_i^{(k)}$ and Theorem \ref{theorem ginzburg-weinstein map},
\begin{equation}\label{secondary limit}	
	\begin{split}
		\bigg\vert 2\,\zeta_i^{(k)}\circ \Delta_t \circ {\rm gw}_t(A)- \ell_i^{(k)}(A)\bigg\vert & = \bigg\vert \frac{2}{t} \ln\vert\Delta_i^{(k)}(b_t)\vert - \ell_i^{(k)}(A)\bigg\vert \\
		& = \bigg\vert \frac{1}{t} \ln\vert\Delta_i^{(k)}(b_t)\vert^2 - \frac{1}{t}\ln\left(\prod_{j=1}^i\lambda_{j}^{(k)}(b_tb_t^*)\right)\bigg\vert 
	\end{split}
\end{equation}
for all $t>0$.

Since $A \in \mathcal{H}^{\delta}_0(n)$, by Proposition \ref{proposition 4.2}, there exists $t_0>0$ such that for all $t \geq t_0$, there exists $w\in W^{\delta/2}$ and $\phi \in T^m$ such that $b_t = \bm{a}\bm{z}_{0}(tw,\phi)$.  
Since $w \in W^{\delta/2}$,  we can combine Equations \eqref{secondary limit}, \eqref{eq: lindstrom minors} and Proposition \ref{proposition 4.1} to conclude that there exists $C\geq0$ such that 
\begin{equation*}
\bigg\vert \frac{1}{t} \ln\vert\Delta_i^{(k)}(b_t)\vert^2 - \frac{1}{t}\ln\left(\prod_{j=1}^i\lambda_{j}^{(k)}(b_tb_t^*)\right) \bigg\vert = \bigg\vert 	m_i^{(k)}(w) - \frac{1}{t}\ln\left(\prod_{j=1}^i\lambda_{j}^{(k)}(b_tb_t^*)\right) \bigg\vert < Ce^{-t\delta/2},
\end{equation*}
 which completes the proof.
\end{proof}

%
\section{Convergence of angle coordinates}\label{section 6}

Recall from Section \ref{section 2} that a choice of connected component of ${\rm Sym}_0(n) := {\rm Sym}(n)\cap \mathcal{H}_0$ determines a choice of angle coordinates $\psi$ for the Gelfand-Zeitlin system. Recall also that (up to a linear transformation), the functions $\varphi$ are angle coordinates on $\mathcal{C}\times T^m$ with respect to $\pi_{\infty}$. In this section, we prove there exists a choice of angle coordinates $\psi$ for the Gelfand-Zeitlin system so that,

\begin{proposition}\label{proposition 6.1}
    On $\mathcal{H}_0$, for all $1 \leq i < k \leq n$,
    \[\lim_{t\to \infty} \varphi^{(k)}_i\circ \Delta_t \circ {\rm gw}_t = \psi^{(k-1)}_i + \mbox{ higher terms}.\]
    The sum on the right is a linear combination of the coordinates $\psi$ as in Equations \eqref{eq: AD poisson isomorphism} and \eqref{ordering of angles} (in particular, ``higher terms'' refers to the ordering of angle coordinates defined after Equation \eqref{ordering of angles}).
\end{proposition}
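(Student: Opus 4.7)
The plan is to decompose the proof of Proposition \ref{proposition 6.1} into three parts: existence of the pointwise limit, identification of the limit up to an additive function of action coordinates, and pinning down that additive correction using the symmetric section $\mathcal{S}$ together with a total positivity argument. Throughout, I write $\Phi_t := \Delta_t\circ{\rm gw}_t$ and $b_t := {\rm gw}_t(A)$.

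For existence, I would fix $A \in \mathcal{H}^{\delta}_0(n)$ and apply Proposition \ref{proposition 4.2} to write $b_t = \bm{a}\bm{z}_0(tw_t, \phi_t)$ for $t$ sufficiently large, with $(w_t, \phi_t) \in W^{\delta/2}\times T^m$. Since $\Delta_i^{(k)}(\bm{a}\bm{z}_0(x,z))$ is a monomial by Theorem \ref{theorem: minors are positive}, the phase $\varphi_i^{(k)}(b_t)$ equals a fixed $\mathbb{Z}$-linear combination of the $\phi_{j,t}$'s modulo $2\pi$. Compactness of $T^m$ then reduces the existence of $\lim_{t\to\infty}\varphi_i^{(k)}\circ\Phi_t$ to the task of identifying every subsequential limit uniquely, which is achieved by the next two steps.

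For identification up to constants, I would use that $\Phi_t$ is a Poisson map: for each Gelfand-Zeitlin action $\ell_j^{(p)}$,
\[
    \{\ell_j^{(p)}, \varphi_i^{(k)}\circ\Phi_t\}_{\mathfrak{k}^*} = \{\ell_j^{(p)}\circ\Phi_t^{-1}, \varphi_i^{(k)}\}_t\circ\Phi_t.
\]
Combining Proposition \ref{proposition 5.1} (so that $\ell_j^{(p)}\circ\Phi_t^{-1}\to 2\zeta_j^{(p)}$) with Theorem \ref{th: AD theorem}, the right-hand side converges to the constant $2\{\zeta_j^{(p)}, \varphi_i^{(k)}\}_\infty$. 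A direct computation in action-angle coordinates on $\mathcal{H}_0$ using \eqref{eq: ell coordinates bivector}, together with the matching property \eqref{eq: AD poisson isomorphism}, shows that $\psi_i^{(k-1)} + (\text{higher terms})$ has precisely those same Poisson brackets with every $\ell_j^{(p)}$. Hence any subsequential limit of $\varphi_i^{(k)}\circ\Phi_t$ differs from $\psi_i^{(k-1)}+(\text{higher terms})$ by a function $F$ which Poisson commutes with every action coordinate, i.e., a function of the actions alone.

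To conclude that $F \equiv 0$, I would fix a connected component $\mathcal{S}$ of ${\rm Sym}_0(n)$ on which $\psi$ is declared to vanish. By property (iii) of Theorem \ref{theorem ginzburg-weinstein map}, $\gamma(tA)\in\mathcal{S}$ for $A\in\mathcal{S}$, and uniqueness of the Cholesky decomposition of a real symmetric positive definite matrix implies that $b_t$ is a real upper triangular matrix. Hence $\Delta_i^{(k)}(b_t)\in\mathbb{R}$ and $\varphi_i^{(k)}(b_t)\in\pi\mathbb{Z}$, so by continuity this integer is locally constant on $\mathcal{S}$. The by-product claim announced in the abstract -- that the Flaschka-Ratiu tori intersect the totally positive locus once the action coordinates are sufficiently large -- forces the integer to be $0$ on the appropriate component, since total positivity gives $\Delta_i^{(k)}(b_t)>0$. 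Since $\psi_i^{(k-1)}$ and its higher terms also vanish on $\mathcal{S}$, and the Gelfand-Zeitlin map restricted to $\mathcal{S}$ surjects onto the interior of the Gelfand-Zeitlin cone, the function-of-actions $F$ must vanish identically. I expect the main obstacle to be this last step, specifically the total positivity claim: its proof will require a careful analysis of the matrix factorization $\bm{a}\bm{z}_0(tw,\phi)$ on the dominant tropical chamber $\mathcal{C}_{\mathbb{T}}$, showing that a suitable real choice of the phases $\phi$ produces a totally positive matrix, and then transporting this statement to generic Flaschka-Ratiu tori via Proposition \ref{proposition 4.2} for large $t$.
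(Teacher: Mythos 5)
Your proposal correctly identifies the two main ideas of the paper's proof: use the symmetric Lagrangian section $\mathcal{S}$ together with total positivity to determine the additive constant, and use the Poisson bracket of the candidate limit with the action coordinates $\ell_j^{(p)}$ to pin down the linear combination of $\psi$'s. The paper's proof has the same shape: converge at the base point $x\in\mathcal{S}\cap L^{-1}(\ell_0)$ and show that the partial derivatives $\frac{\partial}{\partial\psi}(\varphi_i^{(k)}\circ\Delta_t\circ{\rm gw}_t)$ converge uniformly, then integrate along the fiber; your version expresses that derivative condition via Poisson brackets and wraps it in a compactness argument.

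However, there is a genuine gap in your second step that is exactly where the paper's technical work lives. You assert that since $\ell_j^{(p)}\circ\Phi_t^{-1}\to 2\zeta_j^{(p)}$ (Proposition~\ref{proposition 5.1}) and $\pi_t\to\pi_\infty$ (Theorem~\ref{th: AD theorem}), the bracket $\{\ell_j^{(p)}\circ\Phi_t^{-1},\varphi_i^{(k)}\}_t$ converges to $2\{\zeta_j^{(p)},\varphi_i^{(k)}\}_\infty$. But the Poisson bracket depends on the \emph{differential} of $\ell_j^{(p)}\circ\Phi_t^{-1}$, not its value, and pointwise convergence of a family of functions does not control the convergence of their derivatives. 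Written out, $\{\ell_j^{(p)}\circ\Phi_t^{-1},\varphi_i^{(k)}\}_t = \sum_{q,r} \frac{\partial(\ell_j^{(p)}\circ\Phi_t^{-1})}{\partial\zeta_q^{(r)}}\{\zeta_q^{(r)},\varphi_i^{(k)}\}_t + \sum_{q,r}\frac{\partial(\ell_j^{(p)}\circ\Phi_t^{-1})}{\partial\varphi_q^{(r)}}\{\varphi_q^{(r)},\varphi_i^{(k)}\}_t$, so you must show that $\frac{\partial\ell_j^{(p)}}{\partial\zeta_q^{(r)}}\to 2\delta_{p,r}\delta_{j,q}$ and $\frac{\partial\ell_j^{(p)}}{\partial\varphi_q^{(r)}}\to 0$, with enough uniformity to pass the limit inside the sum. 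This is precisely the content of Lemma~\ref{lemma: convergence of derivatives}, which the paper proves by applying the Implicit Function Theorem to the relations $f_j^{(k)}(\ell,\zeta,\varphi)=0$ coming from~\eqref{eq: implicit function theorem functions} and carefully estimating the block structure of the Jacobians using~\eqref{eq: estimate 1} and~\eqref{eq: estimate 2}. Without this derivative control your argument is circular: the quantity you are trying to identify, namely the derivative of $\varphi_i^{(k)}\circ\Phi_t$ along a Hamiltonian vector field, is what the IFT lemma supplies. Additionally, your use of compactness of $T^m$ to establish existence of a subsequential limit of the \emph{functions} $\varphi_i^{(k)}\circ\Phi_t$ is not justified (compactness of the target gives subsequential limits of values at a single point, not of the maps, absent equicontinuity), and in any case it becomes unnecessary once the derivative control is in place, since convergence of derivatives uniformly on a compact fiber plus convergence at the base point $x\in\mathcal{S}$ gives direct convergence. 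Finally, a minor but real issue in your third step: you fix an arbitrary component $\mathcal{S}$ at the outset and only later invoke the total positivity criterion to pick ``the appropriate component''; the paper's Lemma~\ref{lemma: connected component} is needed up front to know that such a distinguished component exists and is unique, and its proof relies on Lemmas~\ref{lemma: correspondence} and~\ref{lemma: intersection} to count intersections of the Flaschka-Ratiu fibers with the chambers of $AN_{\mathbb{R}}$.
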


In subsection \ref{ss1}, we show that the Hamiltonian vector fields of the Flaschka-Ratiu system on $AN$ converge as $t\to \infty$ to the Hamiltonian vector fields of the action coordinates $\zeta$. In subsection \ref{ss2}, we prove that for $t$ sufficiently large, the fibers of the Flaschka-Ratiu system intersect the chamber $AN_+$ of matrices in $AN$ such that the minors $\Delta_i^{(k)}>0$ (one may think of this as the chamber of ``totally positive'' matrices in $AN$). These facts are combined in subsection \ref{ss3} to prove Proposition \ref{proposition 6.1} and Theorem \ref{main theorem}.


\subsection{Convergence of Hamiltonian vector fields}\label{ss1}

For $I\subseteq \{1,\ldots, k\}$, let $L_I$ be the linear function so that $L_I(\ell_1^{(k)},\ldots , \ell_k^{(k)}) = \sum_{i\in I}\lambda_i^{(k)}$ (cf. Equation \eqref{eq: definition of ell}). Recall also  that for $I,J \subseteq \{n-k+1,\ldots,n\}$, $|I|=|J|=j$, there exists a Laurent polynomial $P_{I,J}$ so that 
\[
	|\Delta_{I,J}|^2 = P_{I,J}\left(\Delta_i^{(k)},\overline{\Delta}_i^{(k)}\right)_{i,k}
\]
(see Equation \eqref{laurent polys}). For all $1\leq j \leq k \leq n$, rearrange Equation \eqref{eq: master equation 2} to define functions (for fixed finite $t$)
\begin{equation}\label{eq: implicit function theorem functions}
    f_j^{(k)}(\ell,\zeta,\varphi) = \sum_{\substack{|I|=j\\ I \subseteq \{1,\ldots k\}}} e^{t L_I(\ell)} 
    - \sum_{\substack{|I|=|J|=j\\ I,J\subseteq \{n-k+1,\ldots,n\}}} P_{I,J}\left(\zeta,\varphi\right),
\end{equation}
where we have substituted $\Delta_i^{(k)} = e^{t\zeta_i^{(k)}+\sqrt{-1}\varphi_i^{(k)}}$ into each $P_{I,J}$ (see Example \ref{example: poly}).

By Theorem \ref{theorem ginzburg-weinstein map}, the functions $\ell(\zeta,\varphi) = \mathcal{L}_t(\Delta_t^{-1}(\zeta,\varphi))$ solve the system of equations
\[
    f_j^{(k)}(\ell,\zeta,\varphi) = 0,\, 1 \leq j \leq k \leq n.
\]
The first step in the proof of Proposition \ref{proposition 6.1} is to apply the Implicit Function Theorem to this system of equations
to get asymptotic control on the partial derivatives of $\ell$ with respect to $\zeta,\varphi$.  

For both sums on the right side of Equation \eqref{eq: implicit function theorem functions}, there is a single term that dominates exponentially for large $t$. In the first sum, recall that if $\ell \in \mathcal{C}^{\delta}$, and $I \neq \{1,\ldots, i\}$, 
\begin{equation}\label{eq: estimate 1}
	L_I(\ell^{(k)}) - \ell_j^{(k)} = \sum_{i\in I} \lambda_i^{(k)} - \left(\lambda_1^{(k)}+\cdots + \lambda_j^{(k)}\right) < -\delta,
\end{equation}
so the term $e^{t\ell_j^{(k)}}$ dominates for large $t$. In the second sum, if $\zeta \in \mathcal{C}^{\delta}$, and $t>0$ is sufficiently large, then by Proposition \ref{proposition 4.2}, there exists $w\in W^{\delta}$ and $\phi\in T^m$ such that $\Delta_t(\bm{az}_0(tw,\phi)) = (\zeta,\varphi)$. Unpacking the proof of \cite[Proposition 2]{APS2}, for $I,J\subseteq \{n-k+1,\ldots,n\}$, $|I| = |J| = j$, not both equal to  $\{n-k+1, \ldots ,n-k+i\}$, 
\begin{equation}\label{eq: estimate 2}
	 e^{-2t\zeta_j^{(k)}}P_{I,J}\left(\zeta,\varphi\right) = \frac{\vert \Delta_{I,J}\vert^2}{\vert \Delta_j^{(k)}\vert^2} < Ce^{-t\delta},
\end{equation}
so the term $\vert\Delta_j^{(k)}\vert^2 = e^{2t\zeta_j^{(k)}}$ dominates the sum for large $t$. In other words, the $j\times j$ minors of the bottom right $k\times k$ submatrix are dominated exponentially by the corner minor $\Delta_j^{(k)}$. The reader may find it useful to work this out for a couple examples, or compare with Example \ref{example: poly}.

In what follows, we order the coordinates 
\[ 
\ell_1^{(n)},\ldots,\ell_n^{(n)},\ell_1^{(n-1)},\ldots,\ell_{n-1}^{(n-1)},\ldots, \ell_1^{(1)}
\]
and similarly for $\zeta$ and $\varphi$.

\begin{lemma}\label{lemma: convergence of derivatives}
	For $t>0$ sufficiently large, 
	\[
		\frac{\partial \ell_i^{(k)}}{\partial \zeta_q^{(p)}} = 2\delta_{k,p}\delta_{i,q}e^{t(2\zeta_q^{(p)}-\ell_i^{(k)})} + O(e^{-t\delta}),\,
		\frac{\partial \ell_i^{(k)}}{\partial \varphi_q^{(p)}} = O(e^{-t\delta}),
	\]
	where $\delta_{k,p}$ and $\delta_{i,q}$ are Kronecker delta functions.
\end{lemma}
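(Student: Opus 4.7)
The plan is to regard $\ell$ as an implicit function of $(\zeta,\varphi)$ determined by the system $f_j^{(k)}(\ell,\zeta,\varphi) = 0$ of \eqref{eq: implicit function theorem functions} and to apply the Implicit Function Theorem, which gives
\[
    \frac{\partial \ell}{\partial (\zeta,\varphi)} = -\left(\frac{\partial f}{\partial \ell}\right)^{-1} \cdot \frac{\partial f}{\partial (\zeta,\varphi)}.
\]
The task thus reduces to computing the two Jacobians and inverting the first, in each case separating a leading term from an exponentially small remainder.

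The first structural observation is that each $f_j^{(k)}$ depends on $\ell$ only through $(\ell_1^{(k)},\ldots,\ell_k^{(k)})$, since the linear forms $L_I$ for $I\subseteq\{1,\ldots,k\}$ involve only these coordinates. Hence $\partial f/\partial \ell$ is block-diagonal in the level index $k$, and so is its inverse. Inside the $k$-th block, estimate \eqref{eq: estimate 1} singles out $I=\{1,\ldots,j\}$ as the unique dominant index in $\sum_I e^{tL_I(\ell)}$, yielding
\[
    \partial f_j^{(k)}/\partial \ell_{j'}^{(k)} = t\,\delta_{j,j'}\,e^{t\ell_j^{(k)}} + O\bigl(t\,e^{t(\ell_j^{(k)}-\delta)}\bigr).
\]
Factoring out the diagonal $t\,\mathrm{diag}(e^{t\ell_j^{(k)}})$ and inverting via Neumann series (the off-diagonal remainder has operator norm $O(e^{-t\delta})$) produces
\[
    \bigl(M^{(k)}\bigr)^{-1}_{i,j} = t^{-1}e^{-t\ell_j^{(k)}}\bigl[\delta_{i,j} + O(e^{-t\delta})\bigr].
\]

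For the other Jacobians, estimate \eqref{eq: estimate 2} shows that the sum $\sum_{I,J}P_{I,J}(\zeta,\varphi)$ appearing in \eqref{eq: implicit function theorem functions} is dominated by the single monomial $|\Delta_j^{(k)}|^2 = e^{2t\zeta_j^{(k)}}$, with all other Laurent monomials smaller by a factor of at least $e^{-t\delta}$. Since this dominant monomial depends on $\zeta_j^{(k)}$ alone and on no angle, differentiating gives
\[
    \partial f_j^{(k)}/\partial \zeta_q^{(p)} = -2t\,\delta_{k,p}\,\delta_{j,q}\,e^{2t\zeta_j^{(k)}} + O\bigl(t\,e^{2t\zeta_j^{(k)}-t\delta}\bigr),
\]
while $\partial f_j^{(k)}/\partial \varphi_q^{(p)} = O(e^{2t\zeta_j^{(k)}-t\delta})$; here the $\varphi$-derivative loses the factor of $t$ because the angle enters only as a phase. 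Composing with $(M^{(k)})^{-1}$ gives, for the leading term of $\partial \ell_i^{(k)}/\partial \zeta_q^{(p)}$, the product $2\delta_{k,p}\delta_{i,q}e^{t(2\zeta_q^{(p)}-\ell_i^{(k)})}$, and routine bookkeeping on the remainders, together with Proposition \ref{proposition 5.1} used to control each factor $e^{t(2\zeta_j^{(k)}-\ell_j^{(k)})}$ by a constant, delivers the $O(e^{-t\delta})$ error in both formulas.

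I expect the principal technical nuisance to be keeping the implicit constants in the $O(\cdot)$ estimates uniform over $\mathcal{H}^{\delta}_0(n)$. The Laurent polynomials $P_{I,J}$ and their exponent vectors depend on cluster combinatorics, but form a finite list determined by $n$, so a uniform bound follows by taking a maximum over this finite list. A secondary point is that the Implicit Function Theorem requires invertibility of $\partial f/\partial \ell$ for $t$ large, which the Neumann-series argument above supplies automatically.
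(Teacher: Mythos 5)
Your proposal is correct and follows essentially the same route as the paper: set up the Implicit Function Theorem for the system $f_j^{(k)}(\ell,\zeta,\varphi)=0$, use the block-diagonal structure of $D_\ell f$ in $k$ together with estimate \eqref{eq: estimate 1} to factor out $t\,\mathrm{diag}(e^{t\ell_j^{(k)}})$ and invert by a Neumann series, use estimate \eqref{eq: estimate 2} to single out $|\Delta_j^{(k)}|^2=e^{2t\zeta_j^{(k)}}$ as the dominant Laurent monomial in $D_{(\zeta,\varphi)}f$ (noting the $\varphi$-derivative loses a factor of $t$), and compose. This matches the paper's proof; the only cosmetic difference is that the paper also records explicitly that $\partial f/\partial\zeta$ and $\partial f/\partial\varphi$ are block upper-triangular because $f^{(k)}$ does not depend on $\zeta^{(k')},\varphi^{(k')}$ for $k'>k$, whereas you subsume this into the Kronecker-delta bookkeeping.
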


Note that since the functions $f_j^{(k)}$ do not involve $\zeta^{(p)}$'s and $\varphi^{(p)}$'s for $p>k$, the partial derivatives
\[
	\frac{\partial \ell_j^{(k)}}{\partial \zeta_q^{(p)}},\, \frac{\partial \ell_j^{(k)}}{\partial \varphi_q^{(p)}} = 0, \mbox{ for all } p>k.
\]

\begin{proof} For finite $t$, let $f\colon \mathbb{R}^{n+m}\times \mathbb{R}^{n+m} \times T^m \to \mathbb{R}^{n+m}$ with coordinates
\[
    f_1^{(n)},\ldots,f_n^{(n)},f_1^{(n-1)},\ldots,f_{n-1}^{(n-1)},\ldots, f_1^{(1)}
\]
as defined in Equation \eqref{eq: implicit function theorem functions}. We will apply the Implicit Function Theorem at solution of $f(\ell,\zeta,\varphi)=0$ with $(\zeta,\varphi) \in \mathcal{C}^{\delta}\times T^m$ fixed, and 
\[
	\ell = \mathcal{L}_t(\Delta_t^{-1}(\zeta,\varphi)).
\]
Note that if $\zeta \in \mathcal{C}^{\delta}$, then by Proposition \ref{proposition 5.1}, $\ell \in \mathcal{C}^{\delta/2}$ for $t$ sufficiently large.

Order the coordinates $\ell,\zeta,\varphi$ as above.  The matrix of partial derivatives of $f$ with respect to $\ell$ is the $(n+m)\times(n+m)$ block diagonal matrix
	\begin{equation*}
		D_{\ell}f = \left(\frac{df_i^{(k)}}{d\ell_j^{(p)}}\right) = \left( \begin{array}{ccc}
			D_{\ell^{(n)}}f^{(n)} & & \\
			& \ddots &       \\
			&& D_{\ell^{(1)}}f^{(1)}
		\end{array}\right),
	\end{equation*}
	where $f^{(k)} = (f^{(k)}_1,\ldots , f^{(k)}_k)$ and $\ell^{(k)} = (\ell^{(k)}_1,\ldots , \ell^{(k)}_k)$. If $\ell \in \mathcal{C}^{\delta}$, then by Equation \eqref{eq: estimate 1}, 
	\begin{equation*}
		D_{\ell^{(k)}}f^{(k)} = t\left( \begin{array}{ccc}
			e^{t\ell_1^{(k)}} & & \\
			& \ddots &       \\
			&& e^{t\ell_k^{(k)}}
		\end{array}\right)\left(I+A^{(k)}\right),
	\end{equation*}
	where $A^{(k)}$ is a $k\times k$ matrix whose entries are $O(e^{-t\delta})$. Thus, for $t$ sufficiently large, $D_{\ell}f$ is invertible. By the Implicit Function Theorem,
	\begin{equation}\label{eq: implicit function theorem}
		\left(\frac{\partial \ell}{\partial \zeta} \bigg\vert \frac{\partial \ell}{ \partial \varphi}\right) 
		= -\left(D_{\ell}f\right)^{-1}\left(\frac{\partial f}{\partial \zeta} \bigg\vert \frac{\partial f}{ \partial \varphi}\right).
	\end{equation} 
	The function $f^{(k)}$ does not depend on $\zeta^{(k')}$'s for $k'>k$. Therefore, the $(n+m)\times (n+m)$ matrix 
	\[ 
		\left(\frac{\partial f}{\partial \zeta}\right) 
		= \begin{pmatrix}
			\dfrac{\partial f^{(n)}}{\partial \zeta^{(n)}} & \dfrac{\partial f^{(n)}}{\partial \zeta^{(n-1)}} & \cdots & \dfrac{\partial f^{(n)}}{\partial \zeta^{(1)}}\\[0.3cm]
			 & \dfrac{\partial f^{(n-1)}}{\partial \zeta^{(n-1)}} &  \cdots &         \\[0.3cm]
			 &  & \ddots & \\[0.3cm]
			 &  &  & \dfrac{\partial f^{(1)}}{\partial \zeta^{(1)}}
		  \end{pmatrix}
	\]
	is block upper-triangular, as is the $(n+m)\times m$ matrix $\left(\frac{\partial f}{\partial \varphi}\right)$. Since $\zeta\in \mathcal{C}^{\delta}$, by Equation \eqref{eq: estimate 2}, for each $i\leq k$,
	\[
		\left(\frac{\partial f^{(k)}}{\partial \zeta^{(i)}}\right) 
		= -2t\left(\begin{array}{ccc}
			e^{2t\zeta_1^{(k)}} &  &  \\
			 & \ddots &     \\	
			 & & e^{2t\zeta_k^{(k)}}\\
		\end{array}\right)B^{(k,i)},
	\]
	where for $k=i$, $B^{(k,k)} = I + B^{(k)}$ is $k\times k$ with the entries of matrix $B^{(k)}$ in $O(e^{-t\delta})$, and for $k>i$, $B^{(k,i)}$ is a $k\times i$ matrix with entries in $O(e^{-t\delta})$. Similarly, for each $i\leq k$,
	\[
		\left(\frac{\partial f^{(k)}}{\partial \varphi^{(i)}}\right) 
		= -2t\left(\begin{array}{ccc}
			e^{2t\zeta_1^{(k)}} &  &  \\
			& \ddots &     \\	
			& & e^{2t\zeta_k^{(k)}}\\
		\end{array}\right)C^{(k,i)},
	\]
	where $C_t^{(k,i)}$ is a $k\times (i-1)$ matrix with entries in $O(t^{-1}e^{-t\delta})$ (we absorb a factor of $t^{-1}$ into $C_t^{(k,i)}$ to simplify the equation on the next line). Thus,
	\[ 
		\left(\frac{\partial f_i}{\partial \zeta_j} \bigg\vert \frac{\partial f_i}{ \partial \varphi_j}\right)_{i,j} 
		= -2t \left(\begin{array}{ccc}
			e^{2t\zeta_1^{(n)}} & &  \\
			& \ddots &            \\	
			& & e^{2t\zeta_1^{(1)}} \\
		\end{array}\right)\left(B^{(k,i)}\,\big\vert\, C^{(k,i)}\right)_{k,i}
	\]
	(note that the ordering of the diagonal entries is the same as for $\ell$ and $f$ above). Plugging this into Equation \eqref{eq: implicit function theorem}, we have
	\[ \left(\frac{\partial \ell}{\partial \zeta} \bigg\vert \frac{\partial \ell}{ \partial \varphi}\right) 
		= 2\left(I+A\right)^{-1}\left(\begin{array}{ccc}
			e^{t(2\zeta_1^{(n)}-\ell_1^{(n)})} & & \\
			& \ddots &           \\	
			& & e^{t(2\zeta_1^{(1)}-\ell_1^{(1)})} \\
		\end{array}\right)\left(B^{(k,i)}\,\big\vert\, C^{(k,i)}\right)_{k,i}, \] 
	where $A$ is the block diagonal matrix whose diagonal blocks are $A^{(k)}$. The matrix $\left(I+A\right)^{-1} = I + O(e^{-t\delta})$, and the result can now be read from the form of this matrix. 
\end{proof}

In our chart $\mathbb{R}^{n+m}\times T^m$, the  Hamiltonian vector field of $\ell_i^{(k)}(\zeta,\varphi)$ with respect to $\pi_t$ is
\[
	X_{\ell_i^{(k)}} = \left\{\ell_i^{(k)} \circ {\rm gw}_t^{-1}\circ \Delta_t^{-1},- \right\}_t
\]
and the Hamiltonian vector field of $\zeta_i^{(k)}$ with respect to $\pi_{\infty}$ is $X_{\zeta_i^{(k)}} = \{\zeta_i^{(k)},- \}_{\infty}.$

\begin{lemma}\label{lemma: convergence of vector fields}
	For all $1\leq i \leq k \leq n$ and $\delta>0$, $X_{\ell_i^{(k)}}$ converges uniformly to $ 2X_{\zeta_i^{(k)}}$ on $\mathcal{C}^{\delta}\times T^m$.
\end{lemma}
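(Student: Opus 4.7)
The plan is to prove uniform convergence by evaluating both vector fields on the coordinate functions $\zeta_a^{(r)}$ and $\varphi_a^{(r)}$ of the chart $\mathbb{R}^{n+m}\times T^m$ and comparing the resulting coefficient functions. Writing $\ell_i^{(k)}(\zeta,\varphi) := \ell_i^{(k)}\circ {\rm gw}_t^{-1}\circ\Delta_t^{-1}$ for the Hamiltonian, the Leibniz rule gives
\[
    X_{\ell_i^{(k)}}(f) = \sum_{p,q}\frac{\partial \ell_i^{(k)}}{\partial \zeta_q^{(p)}}\{\zeta_q^{(p)},f\}_t + \sum_{p,q}\frac{\partial \ell_i^{(k)}}{\partial \varphi_q^{(p)}}\{\varphi_q^{(p)},f\}_t,
\]
for $f\in \{\zeta_a^{(r)},\varphi_a^{(r)}\}$. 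The two ingredients that control this sum are Lemma \ref{lemma: convergence of derivatives}, which provides uniform asymptotics on $\mathcal{C}^\delta \times T^m$ for the partial derivatives, and Theorem \ref{th: AD theorem}, which provides uniform asymptotics for the Poisson brackets $\{\cdot,\cdot\}_t$.

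Given these tools, I would proceed by straightforward term accounting. The terms involving $\partial \ell_i^{(k)}/\partial \varphi_q^{(p)}$ are all $O(e^{-t\delta})$ and are paired with brackets that remain uniformly bounded, so they vanish uniformly in $(\zeta,\varphi)\in \mathcal{C}^\delta\times T^m$. Among the $\partial \ell_i^{(k)}/\partial \zeta_q^{(p)}$ terms, all off-diagonal entries are $O(e^{-t\delta})$ and likewise contribute nothing in the limit. The single surviving contribution comes from $(p,q)=(k,i)$, namely
\[
    \frac{\partial \ell_i^{(k)}}{\partial \zeta_i^{(k)}}\{\zeta_i^{(k)},f\}_t = \Bigl(2e^{t(2\zeta_i^{(k)}-\ell_i^{(k)})} + O(e^{-t\delta})\Bigr)\{\zeta_i^{(k)},f\}_t.
\]
For $f=\zeta_a^{(r)}$ this contribution vanishes uniformly by Theorem \ref{th: AD theorem}, while for $f=\varphi_a^{(r)}$ it converges uniformly to $2\{\zeta_i^{(k)},\varphi_a^{(r)}\}_\infty$, exactly matching $2X_{\zeta_i^{(k)}}$.

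The key subtlety, and what I expect to be the main obstacle, is ensuring that the exponential factor $e^{t(2\zeta_i^{(k)}-\ell_i^{(k)})}$ converges to $1$ uniformly rather than merely remaining bounded. This is where Proposition \ref{proposition 5.1} plays the decisive role: its quantitative estimate $|2\zeta_i^{(k)}-\ell_i^{(k)}|<Ce^{-t\delta/2}$ on $\mathcal{C}^\delta\times T^m$ implies $t|2\zeta_i^{(k)}-\ell_i^{(k)}|\leq Cte^{-t\delta/2}\to 0$ uniformly as $t\to\infty$, hence $e^{t(2\zeta_i^{(k)}-\ell_i^{(k)})}=1+o(1)$ uniformly. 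The weaker qualitative statement $2\zeta_i^{(k)}-\ell_i^{(k)}\to 0$ would not suffice; the exponential decay rate $e^{-t\delta/2}$ in Proposition \ref{proposition 5.1} is essential precisely to dominate the factor $t$ in the exponent. Once this point is cleared, the remainder of the argument is routine bookkeeping with the estimates from Lemma \ref{lemma: convergence of derivatives} and Theorem \ref{th: AD theorem}.
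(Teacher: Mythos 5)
Your proof follows essentially the same route as the paper: expand $X_{\ell_i^{(k)}}$ via the chain rule, feed in Lemma \ref{lemma: convergence of derivatives} for the partial derivatives and Theorem \ref{th: AD theorem} for the brackets, and observe that only the $(p,q)=(k,i)$ term survives, giving $2e^{t(2\zeta_i^{(k)}-\ell_i^{(k)})}\{\zeta_i^{(k)},-\}_\infty + O(e^{-t\delta})$. Your treatment of the remaining factor is in fact more careful than the paper's one-line appeal to Proposition \ref{proposition 5.1}: you correctly spell out that the exponential decay rate $Ce^{-t\delta/2}$ is needed to dominate the extra factor of $t$ in the exponent so that $e^{t(2\zeta_i^{(k)}-\ell_i^{(k)})}\to 1$ uniformly, a point the published proof leaves implicit.
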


\begin{proof} In matrix notation, the Hamiltonian vector field 
	\[
		X_{\ell_i^{(k)}} =
		\begin{pmatrix}
			\left\{ \zeta_j, \zeta_i\right\}_t   & \left\{ \varphi_j, \zeta_i\right\}_t   \\[0.3cm]
			\left\{ \zeta_j, \varphi_i\right\}_t & \left\{ \varphi_j, \varphi_i\right\}_t \\
		\end{pmatrix}_{i,j}
		\begin{pmatrix}
			\dfrac{\partial\ell_i^{(k)}}{\partial \zeta_j} \\[0.5cm]
			\dfrac{\partial\ell_i^{(k)}}{\partial \varphi_j} \\
		\end{pmatrix}_{j}
	\]
	Combining Theorem \ref{th: AD theorem} and Lemma \ref{lemma: convergence of derivatives}, for $(\zeta,\varphi) \in \mathcal{C}^{\delta}\times T^m$ and $t$ sufficiently large, 
	\begin{equation*}
	    \begin{split}
	        X_{\ell_i^{(k)}} & = 2e^{t(2\zeta_i^{(k)}-\ell_i^{(k)})}\sum_{q,p} \left\{\zeta_i^{(k)},\varphi_q^{(p)} \right\}_{\infty} \frac{\partial}{\partial \varphi_q^{(p)}} + O(e^{-t\delta}) \\
	        & = 2e^{t(2\zeta_i^{(k)}-\ell_i^{(k)})} \left\{\zeta_i^{(k)},- \right\}_{\infty} + O(e^{-t\delta}). \\
	    \end{split}
	\end{equation*}
	The result follows by Proposition \ref{proposition 5.1}, since $e^{t(2\zeta_i^{(k)}-\ell_i^{(k)})}\to 1$ as $t\to \infty$.
\end{proof}

\subsection{Totally positive matrices and fibers of the Flaschka-Ratiu  system}\label{ss2}

Let $AN_{\mathbb{R}}$ be the set of matrices in $AN$ with real entries. The hypersurfaces defined by equations
$\Delta_{i}^{(k)} = 0$
divide $AN_{\mathbb{R}}$ into chambers.  The chamber of ``totally positive'' matrices is
\[
    AN_+ := \left\{ b \in AN_{\mathbb{R}} \colon \, \Delta_i^{(k)}(b)>0,\,  \forall 1 \leq i < k \leq n\right\}.
\]
The restrictions of the functions $\varphi_{i}^{(k)}$ to $AN_{\mathbb{R}}$, defined where $\Delta_{\ell}^{(k)}\neq 0$, take values in $\{0,\pi\}$. Each chamber of $AN_{\mathbb{R}}$ is a joint level set of these functions. By the description above, $AN_+$ is the joint level set where every $\varphi_{i}^{(k)} = 0$.

The set $AN_{\mathbb{R}}$ is also divided into chambers by the Flaschka-Ratiu systems (i.e. by the hypersurfaces where singular values $\ln(\lambda_i^{(k)}(bb^*))$ collide). By Theorem \ref{theorem ginzburg-weinstein map}, each of these chambers equals ${\rm gw}_t(\mathcal{S})$, for a connected component $\mathcal{S}$ of ${\rm Sym}_0(n) = {\rm Sym}(n)\cap \mathcal{H}_0$.  

Although the two chamber structures are different, in Lemma \ref{lemma: connected component} we prove there is a connected component $\mathcal{S} \subseteq {\rm Sym}_0(n)$ so that for arbitrary $\delta>0$ and $t$ sufficiently large, the subset ${\rm gw}_t(\mathcal{S}^{\delta}) \subseteq {\rm gw}_t(\mathcal{S})$ is contained in $AN_+$. Here $\mathcal{S}^{\delta} = \mathcal{S}\cap L^{-1}(\mathcal{C}^{\delta})$. As we will see in the next subsection, this implies there is a choice of angle coordinates $\psi$ for the Gelfand-Zeitlin system such that for sufficiently large $t$, points in the Lagrangian section where $\psi=0$ are sent by the scaled Ginzburg-Weinstein map to points in the Lagrangian section where $\varphi=0$.

\begin{lemma}\label{lemma: correspondence}
	The map $\bm{a}\bm{z}_{0}\colon (\mathbb{R}_+)^{n+m}\times T^m \to AN$ gives a 1-1 correspondence between chambers of $AN_{\mathbb{R}}$ and coordinates $\phi \in \{0,\pi\}^m$, i.e. the chambers of $AN_{\mathbb{R}}$ equal the images $\bm{a}\bm{z}_{0}((\mathbb{R}_+)^{n+m}\times \{\phi\})$ for fixed $\phi \in \{0,\pi\}^m$, and these images are distinct. 
\end{lemma}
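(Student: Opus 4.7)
I would reduce the lemma to invertibility modulo $2$ of a specific $m\times m$ integer matrix. By Theorem \ref{theorem: minors are positive}, for each pair $i<k$ the minor $\Delta_i^{(k)}\circ\bm{a}\bm{z}_0$ is a single monomial $x^{\alpha^{(i,k)}}z^{\beta^{(i,k)}}$; let $B$ be the $m\times m$ matrix whose rows are $\beta^{(i,k)}\in \mathbb{Z}_{\geq 0}^m$ for $i<k$. Writing $\phi_j = \epsilon_j \pi$ with $\epsilon_j\in\{0,1\}$, the substitution $z_j = r_j e^{i\phi_j}$ produces a matrix $\bm{a}\bm{z}_0(x,re^{i\phi})$ with real entries (since $\bm{a}\bm{z}_0$ is a polynomial with real coefficients), and each non-principal minor becomes
\[
    \Delta_i^{(k)}\bigl(\bm{a}\bm{z}_0(x,re^{i\phi})\bigr) = (-1)^{\beta^{(i,k)}\cdot \epsilon}\, x^{\alpha^{(i,k)}}r^{\beta^{(i,k)}}.
\]
Thus the vector of signs of the non-principal minors equals $B\epsilon \pmod 2$, and in particular is constant along each slice $(\mathbb{R}_+)^{n+m}\times\{\phi\}$; hence each slice lies in a single chamber of $AN_{\mathbb{R}}$.

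The crux is to show $\det B = \pm 1$. By Sections \ref{section 3} and \ref{section 4}, both $\bm{a}\bm{z}_0|_{(\mathbb{R}_+)^n\times (\mathbb{C}^{*})^m}$ and $\Delta$ restricted to the open subset where all $\Delta_i^{(k)}$ are non-zero are diffeomorphisms onto open subsets, and the image of the former sits inside the domain of the latter (since each $\Delta_i^{(k)}\circ \bm{a}\bm{z}_0$ is a monomial in $x,z$ that is non-zero on $(\mathbb{R}_+)^n\times (\mathbb{C}^{*})^m$). Therefore the composition is an injective monomial self-map of $(\mathbb{R}_+)^n\times (\mathbb{C}^{*})^m$. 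Its first $n$ components $\Delta_k^{(k)} = \prod_{j=n-k+1}^n x_j$ depend only on $x$ via a unimodular change of variables, so injectivity reduces to injectivity of the monomial map $z\mapsto (z^{\beta^{(i,k)}})_{i<k}$ on $(\mathbb{C}^{*})^m$. A standard argument (the kernel of a monomial map of complex tori with integer exponent matrix $B$ has order $|\det B|$) then forces $\det B = \pm 1$; in particular, $B$ is invertible over $\mathbb{F}_2$.

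Combining these ingredients yields the lemma. By invertibility of $B \pmod 2$, distinct $\phi\in\{0,\pi\}^m$ produce distinct sign patterns, so the slices lie in distinct chambers. For surjectivity onto chambers, take any $b\in AN_{\mathbb{R}}$ lying in a chamber; by definition all $\Delta_i^{(k)}(b)\neq 0$, so $b$ admits a unique preimage $(x,z)\in (\mathbb{R}_+)^n\times (\mathbb{C}^{*})^m$ under $\bm{a}\bm{z}_0$. Since $\bm{a}\bm{z}_0$ has real coefficients, $\bm{a}\bm{z}_0(x,\bar z) = \bar b = b$, and uniqueness of the preimage forces $\bar z = z$, so each $z_j\in \mathbb{R}^{*}$. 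Writing $z_j = r_j e^{i\phi_j}$ with $\phi_j\in\{0,\pi\}$ puts $b$ in the image of the $\phi$-slice. The main (but modest) obstacle is the unimodularity of $B$, which is essentially a restatement of the fact that $\bm{a}\bm{z}_0$ and $\Delta$ parametrize the same open cell of $AN$.
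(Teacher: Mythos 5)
Your proof is correct, but it establishes the key unimodularity by a genuinely different route than the paper. The paper proves directly that the exponent matrix $B$ is triangular with unit diagonal: it labels the coordinates $z_{q,p}$ so that (by Lindstr\"om's lemma) each minor satisfies $\Delta_i^{(k)}(\bm{z}_0(z)) = z_{i,k}\cdot(\text{monomial in }z_{q,p}\text{ with }p<k)$, and then prescribes the signs of the $z_{i,k}$ inductively in $k$. This is more elementary and gives strictly more structural information (the triangular shape of $B$) than you extract. Your argument instead deduces $\det B = \pm 1$ from injectivity of the monomial self-map $\Delta\circ\bm{a}\bm{z}_0$ of $(\mathbb{R}_+)^n\times(\mathbb{C}^*)^m$, invoking the standard fact that the kernel of a monomial map of complex tori with integer exponent matrix $B$ has order $|\det B|$. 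That is a valid but heavier route, and it also silently leans on the image of $\bm{a}\bm{z}_0$ being exactly the locus where all $\Delta_i^{(k)}\neq 0$ (a standard double Bruhat cell fact, cf. \cite{FZ2,BZ}) both when composing with $\Delta$ and, more critically, in the surjectivity step where you extract a preimage of an arbitrary $b\in AN_{\mathbb R}$; that fact should be flagged. Your conjugation trick $\bm{a}\bm{z}_0(x,\bar z)=\overline{\bm{a}\bm{z}_0(x,z)}$ to force $z$ real is a clean way to finish the surjectivity direction and is implicit rather than spelled out in the paper.
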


This lemma implies that the chambers of $AN_{\mathbb{R}}$ are connected.


\begin{proof}
    We can prescribe the chamber of $AN_{\mathbb{R}}$ containing $\bm{az}_0(x,z) = \bm{a}(x)\bm{z}_0(z)$ by prescribing the $z$'s inductively. We ignore the matrix factor $\bm{a}(x)$, as it is always positive. As shown in Figure \ref{fig 5}, label the coordinates $z_1,\dots,z_m$ by
	\[
		z_{1,2},  z_{1,3}, z_{2,3},\dots, z_{1,n},\dots, z_{n-1,n}.
	\]
	\begin{figure}[h!]
	    \centering
        \[
	    \begin{tikzpicture}[baseline={([yshift=-.5ex]current bounding box.center)}]
		\draw (0.5,0)--(4.5,0);
		\draw (0.5,0.5)--(4.5,0.5);
		\draw (0.5,1)--(4.5,1);
		\draw (0.5,1.5)--(4.5,1.5);
		\draw (2.5,2.1)node{$\vdots$};
		\draw (0.5,2.5)--(4.5,2.5);

		\draw (1.25,1.5)--(1.5,1) node[above right, font=\small]{$z_{1,4}$};
		\draw (2.25,1.5)--(2.5,1) node[above right, font=\small]{$z_{2,4}$};
		\draw (3.25,1.5)--(3.5,1) node[above right, font=\small]{$z_{3,4}$};

		\draw (2,1)--(2.25,0.5) node[above right, font=\small]{$z_{1,3}$};
		\draw (3,1)--(3.25,0.5) node[above right, font=\small]{$z_{2,3}$};

		\draw (2.75,0.5)--(3,0) node[above right, font=\small]{$z_{1,2}$};
		\end{tikzpicture}
        \]
	    \caption{Label of variables.}
	    \label{fig 5}
	\end{figure}
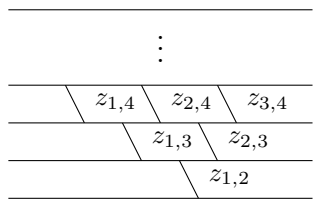
    The minor $\Delta_1^{(2)}(\bm{z}_0(z)) = z_{1,2}$, so we can prescribe its sign by prescribing the sign of $z_{1,2}$. Assume we have prescribed the signs of the minors $\Delta_q^{(p)}(\bm{z}_0(z))$ by prescribing the coordinates $z_{q,p}$,  for all $1\leq q < p<k$. By Lindstr\"om lemma, for all $1\leq i < k$, 
	\begin{equation*}
		\Delta_i^{(k)}(\bm{z}_0(z)) = z_{i,k}\cdot\left( \mbox{ monomial in } z_{q,p},\,  q<k \right).
	\end{equation*}
	Thus, we can prescribe the signs of $\Delta_i^{(k)}(\bm{z}_0(z))$ by prescribing the signs of $z_{i,k}$.
\end{proof}

Combining Lemma \ref{lemma: correspondence} and Proposition \ref{proposition 4.2}, for all $\delta>0$, there exists a $t_0>0$ such that, for all $t\geq t_0$ and $\ell \in \mathcal{C}^{\delta}$, there exists $w\in W^{\delta/2}$ and $\phi \in T^m$, so that 
$$\bm{a}\bm{z}_{0}(tw,\phi) \in \mathcal{L}_t^{-1}(\ell)\cap AN_{\mathbb{R}}$$
is contained whichever chamber of $AN_{\mathbb{R}}$ we please, by choosing $\phi$ appropriately. Thus, the fiber $\mathcal{L}_t^{-1}(\ell)$ intersects every chamber of $AN_{\mathbb{R}}$ at least once.

\begin{lemma}\label{lemma: intersection}
	For all $\delta > 0$, there exists $t_0 >0$ such that for all $t \geq t_0$ and $\ell\in \mathcal{C}^{\delta}$, the fiber $\mathcal{L}_t^{-1}( \ell)$ intersects every chamber of $AN_{\mathbb{R}}$ exactly once.
\end{lemma}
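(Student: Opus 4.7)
The plan is a counting argument. By the discussion immediately preceding this lemma, for $\ell \in \mathcal{C}^{\delta}$ and $t$ sufficiently large the fiber $\mathcal{L}_t^{-1}(\ell)$ meets each of the $2^m$ chambers of $AN_{\mathbb{R}}$ (enumerated by Lemma \ref{lemma: correspondence}) at least once; it therefore suffices to establish the matching upper bound
\[
    \#\bigl(\mathcal{L}_t^{-1}(\ell) \cap AN_{\mathbb{R}}\bigr) \leq 2^m.
\]

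To obtain this bound I would transfer the problem to the Hermitian side via the Ginzburg-Weinstein diffeomorphism. Using property (vi) of Theorem \ref{theorem ginzburg-weinstein map} (equivariance of $\gamma$ under complex conjugation), one checks that $\gamma({\rm Sym}(n)) = {\rm Sym}(n)\cap \mathcal{H}^+$; composing with the bijection $h\colon AN_{\mathbb{R}} \to {\rm Sym}(n)\cap \mathcal{H}^+$ (whose inverse is the Cholesky decomposition), this shows via the formula ${\rm gw}_t(A) = h^{-1}(\gamma(tA))$ that ${\rm gw}_t$ restricts to a bijection ${\rm Sym}(n) \to AN_{\mathbb{R}}$. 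Since ${\rm gw}_t$ intertwines $L$ with $\mathcal{L}_t$ by Theorem \ref{theorem ginzburg-weinstein map}(i), and $\ell \in \mathcal{C}^{\delta}$ ensures strict interlacing (so $L^{-1}(\ell) \subseteq \mathcal{H}_0$), I obtain
\[
    \mathcal{L}_t^{-1}(\ell) \cap AN_{\mathbb{R}} = {\rm gw}_t\bigl(L^{-1}(\ell)\cap {\rm Sym}_0(n)\bigr).
\]

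It remains to count the right-hand side. The fiber $L^{-1}(\ell) \subseteq \mathcal{H}_0$ is a torus $T^m$ on which the transpose involution $\sigma\colon A \mapsto A^T$ acts. Being anti-Poisson and preserving each action $\lambda_i^{(k)}$, the map $\sigma$ conjugates every Gelfand-Zeitlin Hamiltonian flow to its inverse; hence in any choice of angle coordinates $\psi$ on the fiber, $\sigma(\psi) = c(\ell) - \psi$ for some $c(\ell) \in T^m$, and the fixed-point equation $2\psi \equiv c(\ell) \pmod{2\pi\mathbb{Z}^m}$ has exactly $2^m$ solutions. Since $L^{-1}(\ell)\cap {\rm Sym}_0(n)$ is precisely the fixed locus of $\sigma$ on $L^{-1}(\ell)$, this supplies the desired upper bound.

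The main technical point is the bookkeeping of the conjugation/transpose properties across the three spaces $\mathcal{H}$, $\mathcal{H}^+$, and $AN$---in particular, ensuring that ${\rm gw}_t$ genuinely restricts to a bijection between real Lagrangian sections on the two sides. Once this is in hand, the torus-fixed-point count is elementary, and combining the upper bound $2^m$ with the lower bound from the remark after Lemma \ref{lemma: correspondence} forces exactly one intersection point per chamber.
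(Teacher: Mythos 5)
Your proposal is correct and follows the same overall counting strategy as the paper: both establish that $\mathcal{L}_t^{-1}(\ell)\cap AN_{\mathbb{R}}$ has exactly $2^m$ points by transporting to $L^{-1}(\ell)\cap{\rm Sym}(n)$ via ${\rm gw}_t$, then combine this with the lower bound ($\geq$ one point per chamber) from the paragraph preceding the lemma. The one place where you diverge is in how the count $\#\bigl(L^{-1}(\ell)\cap{\rm Sym}(n)\bigr)=2^m$ is obtained: the paper simply asserts this ``follows directly from linear algebra'' (the intended argument being the inductive reconstruction of a real symmetric matrix from its Gelfand-Zeitlin data, with a $\pm$ sign choice for each of the $m$ off-diagonal slots), whereas you give a symplectic argument---$A\mapsto A^T$ is an anti-Poisson involution preserving the actions, hence acts as $\psi\mapsto c(\ell)-\psi$ on the Liouville torus, and the fixed-point equation has $2^m$ solutions. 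Both derivations are valid; yours is cleaner conceptually and dovetails nicely with the action-angle framework already set up in Section \ref{section 2.1}, while the linear-algebra route is more elementary. You also fill in the verification (via property (vi) of Theorem \ref{theorem ginzburg-weinstein map} and Cholesky) that ${\rm gw}_t$ restricts to a bijection ${\rm Sym}(n)\to AN_{\mathbb{R}}$, which the paper asserts without elaboration.
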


\begin{proof} Fix $\delta>0$ and let $t_0$ as in Proposition \ref{proposition 4.2}. The fiber $L^{-1}(\ell)$ intersects ${\rm Sym}(n)$ at exactly $2^m$ points (this follows directly from linear algebra). By Theorem \ref{theorem ginzburg-weinstein map}, $ \mathcal{L}_t^{-1}(\ell) = {\rm gw}_t(L^{-1}(\ell))$, and ${\rm gw}_t(x) \in AN_{\mathbb{R}}$ if and only if $x \in {\rm Sym}(n)$. Thus, the fibers of $\mathcal{L}_t$ intersect $AN_{\mathbb{R}}$ at exactly $2^m$ points.  Since there are $2^m$ chambers in $AN_{\mathbb{R}}$, and $\mathcal{L}_t^{-1}(\ell)$ intersects every chamber of $AN_{\mathbb{R}}$ at least once, this completes the proof.
\end{proof} 

Next, we show that there is a unique connected component $\mathcal{S} \subseteq {\rm Sym}_0(n)$ whose elements are sent to $AN_+$ by ${\rm gw}_t$ for $t$ sufficiently large. For a connected component $\mathcal{S} \subseteq {\rm Sym}_0(n)$, let $\mathcal{S}^{\delta} := \mathcal{S}\cap L^{-1}(\mathcal{C}^{\delta})$.

\begin{lemma}\label{lemma: connected component}
	There is a unique connected component $\mathcal{S} \subseteq {\rm Sym}_0(n)$ such that for all $\delta>0$, there exists a $t_0\geq 0$ such that for all $t  \geq t_0$ and $x\in \mathcal{S}^{\delta}$,
	\[
	    {\rm gw}_t(x)\in AN_+.
	\]
\end{lemma}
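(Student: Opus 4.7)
The strategy is to attach to each connected component $\mathcal{S}$ of $\text{Sym}_0(n)$ a ``sign label'' $\phi(\mathcal{S}) \in \{0,\pi\}^m$ indexing the chamber of $AN_{\mathbb{R}}$ that receives ${\rm gw}_t(\mathcal{S}^{\delta})$ for large $t$, then show that $\mathcal{S}\mapsto \phi(\mathcal{S})$ is a bijection, and finally let $\mathcal{S}$ be the preimage of $(0,\ldots,0)$, which corresponds to $AN_+$.

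First, I would observe that $\mathcal{S}^\delta$ is connected: the Gelfand-Zeitlin map $L$ restricts to a homeomorphism $\mathcal{S}^\delta \to \mathcal{C}^\delta$ (since $\mathcal{S}$ is a global section), and $\mathcal{C}^\delta$ is convex. Next, note that $\mathcal{L}_t \circ {\rm gw}_t = L$, which follows immediately from Equation \eqref{eq: AM} after unraveling definitions. So for $\delta > 0$, $t \geq t_0(\delta)$ (with $t_0$ from Proposition \ref{proposition 4.2}), and $x \in \mathcal{S}^\delta$, one can uniquely write ${\rm gw}_t(x) = \bm{a}\bm{z}_0(tw(t,x),\phi(t,x))$ with $w(t,x)\in W^{\delta/2}$ and $\phi(t,x)\in T^m$. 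By Theorem \ref{theorem ginzburg-weinstein map}(iii), ${\rm gw}_t(x)\in AN_{\mathbb{R}}$, so Lemma \ref{lemma: correspondence} forces $\phi(t,x) \in \{0,\pi\}^m$. Since $\bm{a}\bm{z}_0$ is a diffeomorphism onto its open image, $(t,x)\mapsto \phi(t,x)$ is continuous; being discrete-valued on the connected set $[t_0,\infty)\times \mathcal{S}^\delta$, it is constant. Denote this value $\phi(\mathcal{S})$; passing to $\delta' < \delta$ and restricting domains shows $\phi(\mathcal{S})$ is independent of $\delta$.

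The remaining task is to argue that $\mathcal{S}\mapsto \phi(\mathcal{S})$ is a bijection between $\pi_0(\text{Sym}_0(n))$ and $\{0,\pi\}^m$. By Lemma \ref{lemma: intersection} and Theorem \ref{theorem ginzburg-weinstein map}, for $\ell \in \mathcal{C}^\delta$, ${\rm gw}_t$ sends the $2^m$ points of $L^{-1}(\ell)\cap \text{Sym}(n)$ bijectively to the $2^m$ points of $\mathcal{L}_t^{-1}(\ell)\cap AN_{\mathbb{R}}$, one in each chamber. Distinct components of $\text{Sym}_0(n)$ therefore land in distinct chambers, so $\mathcal{S}\mapsto \phi(\mathcal{S})$ is injective; a counting argument (each component of $\text{Sym}_0(n)$ is a global section and so contributes exactly one of the $2^m$ intersection points in each fiber) yields $|\pi_0(\text{Sym}_0(n))|=2^m$, and hence the map is a bijection. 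The unique $\mathcal{S}$ with $\phi(\mathcal{S})=(0,\ldots,0)$ is the desired component, and its uniqueness is built into the bijection.

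The principal technical point is ensuring that $\phi(t,x)$ is well defined and depends continuously on $(t,x)$ throughout $[t_0(\delta),\infty)\times \mathcal{S}^\delta$. This is where Proposition \ref{proposition 4.2} plays its essential role, placing ${\rm gw}_t(x)$ inside the open set on which $\bm{a}\bm{z}_0$ is a chart, so that the factorization coordinates $(w,\phi)$ can be uniquely and smoothly extracted; once this is in hand, the bijection and the identification of the ``$AN_+$-component'' follow cleanly.
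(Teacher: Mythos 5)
Your proof is correct, but it takes a genuinely different route from the paper's. The paper fixes $t_0$ (from Lemma~\ref{lemma: intersection}), considers the set ${\rm gw}_{t_0}^{-1}(AN_+)\cap{\rm Sym}^\delta(n)$, and shows it is connected via a topological argument: if it decomposed as a disjoint union $A\cup B$ of open sets, then $L(A)$ and $L(B)$ would be disjoint open sets covering $\mathcal{C}^\delta$ (disjointness uses that $\mathcal{L}_{t_0}^{-1}(\ell)$ meets $AN_+$ exactly once; openness uses openness of $L$), contradicting convexity of $\mathcal{C}^\delta$. Connectedness places this set inside a single component $\mathcal{S}$, and the paper then extends from $t_0$ to all $t\geq t_0$ via the scaling identity ${\rm gw}_t(x) = {\rm gw}_{t_0}(tx/t_0)$ combined with $t\mathcal{S}^\delta\subseteq\mathcal{S}^\delta$ for $t\geq 1$, sidestepping any need for joint continuity in $(t,x)$. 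Your argument instead attaches a sign label $\phi(\mathcal{S})\in\{0,\pi\}^m$ to each component and establishes its constancy by continuity of the factorization coordinates supplied by Proposition~\ref{proposition 4.2} together with the chart property of $\bm{a}\bm{z}_0$; you then prove that $\mathcal{S}\mapsto\phi(\mathcal{S})$ is a bijection onto the set of chambers. Your approach is more conceptual and yields the full classification of components by chambers (a slightly stronger conclusion than needed), while the paper's is more economical, targeting only the $AN_+$ component and exploiting the convenient scaling structure of ${\rm gw}_t$. Both rely on the same ingredients (Lemmas~\ref{lemma: correspondence}, \ref{lemma: intersection}, Proposition~\ref{proposition 4.2}, Theorem~\ref{theorem ginzburg-weinstein map}) and on the fact that each component of ${\rm Sym}_0(n)$ is the image of a global section of $L$, so the two proofs are comparable in strength of hypotheses.
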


\begin{proof} Fix $\delta>0$ arbitrary. By Lemma \ref{lemma: intersection}, there exists $t_0\geq 0$ such that for all $t\geq t_0$ and $\ell\in \mathcal{C}^{\delta}$, $\mathcal{L}_t^{-1}(\ell)$ intersects every chamber of $AN_{\mathbb{R}}$ exactly once. 

Consider the set 
\[
    {\rm gw}_{t_0}^{-1}(AN_+)\cap {\rm Sym}^{\delta}(n),
\]
where ${\rm Sym}^{\delta}(n) := {\rm Sym}(n)\cap L^{-1}(\mathcal{C}^{\delta})$ (recall $L$ is the Gelfand-Zeitlin map with coordinates $\ell_i^{(k)}$).  By Lemma \ref{lemma: intersection}, the image of this set under $L$ equals $\mathcal{C}^{\delta}$. 

If there exists disjoint open subsets $A,B\subseteq {\rm Sym}^{\delta}(n)$, so that 
\[
    {\rm gw}_{t_0}^{-1}(AN_+)\cap {\rm Sym}^{\delta}(n) = A\cup B,
\]
then 
\[
    \mathcal{C}^{\delta} = L(A\cup B) = L(A) \cup L(B).
\]
Since the restriction of $L$ to $\mathcal{H}_0$ is open as a map to $\mathcal{C}^{\circ}$, $L(A)$ and $L(B)$ are open. Since for all $\ell \in \mathcal{C}^{\delta}$, $\mathcal{L}_{t_0}^{-1}(\ell)$ intersects $AN_{+}$ exactly once, the sets $L(A)$ and $L(B)$ are disjoint. This implies that $\mathcal{C}^{\delta}$ is not connected, which is a contradiction (it is convex). Thus, the set ${\rm gw}_{t_0}^{-1}(AN_+)\cap {\rm Sym}^{\delta}(n)$ is connected, and must be contained in a connected component $\mathcal{S}\subseteq {\rm Sym}_0(n)$.

The result follows for all $t\geq t_0$, since ${\rm gw}_{t_0}^{-1}(AN_+)\cap {\rm Sym}^{\delta}(n) = \mathcal{S}^{\delta}$ and $t\mathcal{S}^{\delta} \subseteq \mathcal{S}^{\delta}$ for all $t\geq 1$. Explicitly, given $t\geq t_0$ and $x\in \mathcal{S}^{\delta}$, we have by definition of ${\rm gw}$ that
\[
    {\rm gw}_{t}(x) = {\rm gw}_{t_0}\left(\frac{t}{t_0}x\right) \in AN_+. \qedhere
\]
\end{proof}

\subsection{Proof of Proposition \ref{proposition 6.1} and Theorem \ref{main theorem}}\label{ss3}

\begin{proof}[Proof of Proposition \ref{proposition 6.1}]
Fix an arbitrary regular fiber of the Gelfand-Zeitlin system, $L^{-1}(\ell_0)$. Since $\ell_0\in \mathcal{C}$,  $\ell_0\in \mathcal{C}^{\delta}$ for some $\delta >0$. We will show the functions $\varphi_i^{(k)}\circ \Delta_t\circ {\rm gw}_t$ converge on $L^{-1}(\ell_0)$ by showing that they converge at a point in $L^{-1}(\ell_0)$ and their derivatives converge uniformly on $L^{-1}(\ell_0)$.

Let $\mathcal{S}\subseteq {\rm Sym}_0(n)$ be the unique connected component described in Lemma \ref{lemma: connected component}. Choose angle coordinates $\psi$ for the Gelfand-Zeitlin system so that $\psi\vert_{\mathcal{S}} = 0$. Let $x$ be the unique point in $ \mathcal{S}\cap L^{-1}(\ell_0)$ that has coordinates $\psi(x) = 0$. By Lemma \ref{lemma: connected component}, for all $t$ sufficiently large, ${\rm gw}_t(x) \in AN_+$. Thus, for all $t$ sufficiently large and $1 \leq q< p\leq n$, 
\[
	\varphi_q^{(p)} \circ \Delta_t\circ {\rm gw}_t(x) = 0,
\]
which equals the value of the linear combination $\psi_q^{(p-1)} + \mbox{ ``higher terms''}$ at $x$ for our choice of angle coordinates.

Second, for all $A \in L^{-1}(\ell_0)$, $1\leq i\leq k$, and for $t$ sufficiently large, 
\begin{equation}
	\begin{split}
		\sum_{j=1}^i\frac{\partial}{\partial \psi_j^{(k)}}\left( \varphi_q^{(p)}\circ\Delta_t\circ {\rm gw}_t\right)(A) 
		& = \left\{\ell_i^{(k)},\varphi_q^{(p)}\circ\Delta_t\circ {\rm gw}_t \right\}_{\mathfrak{k^*}}(A) \\
		& = \left\{\ell_i^{(k)}\circ {\rm gw}_t^{-1}\circ\Delta_t^{-1},\varphi_q^{(p)}\right\}_t   (\Delta_t\circ {\rm gw}_t(A)). \\
	\end{split}
\end{equation}
By Lemma \ref{lemma: convergence of vector fields},  this converges uniformly on any $\mathcal{C}^{\delta}\times T^m$ to the constant function 
\begin{equation}
	\begin{split}
		2\left\{\zeta_i^{(k)},\varphi_q^{(p)}\right\}_{\infty} 
		& = \left\{\ell_i^{(k)}, \psi_q^{(p-1)} + \mbox{ ``higher terms''}  \right\}_{\mathfrak{k}^*} \\
		& = \sum_{j=1}^i \frac{\partial}{\partial \psi_j^{(k)}} \left(\psi_q^{(p-1)} + \mbox{ ``higher terms''}\right).
	\end{split}
\end{equation}
Since $\Delta_t\circ {\rm gw}_t(A)$ is contained in $\mathcal{C}^{\delta/2}\times T^m$ for $t$ sufficiently large, this completes the proof. 
\end{proof}

\begin{proof}[Proof of Theorem \ref{main theorem}] As observed in the proof of \cite[Theorem 7]{AD}, there is a unique Poisson isomorphism from $\mathcal{H}_0$ equipped with $\pi_{\mathfrak{k}^*}$ to $\mathcal{C}\times T^m$ equipped with $\pi_{\infty}$ such that 
\[
    \zeta_i^{(k)} = \frac{1}{2} \ell_i^{(k)}
\]
and 
\[
    \varphi_i^{(k)} = \psi_i^{(k-1)} + \mbox{linear combination of higher $\psi$ in the order}.
\]
Combining Propositions \ref{proposition 5.1} and \ref{proposition 6.1}, we see that in coordinates $\zeta,\varphi$ on $\mathbb{R}^{n+m}\times T^m$ the map \eqref{finite t} converges on $\mathcal{H}_0$ to this Poisson isomorphism. This is the Gelfand-Zeitlin system, up to a linear change of coordinates.
\end{proof}

\section{Conclusion}

Recall from Section \ref{section 4} that the tropicalization of a positive polynomial $p(x)$, in complex variables, is equal, on an open dense set, to the ``tropical'' limit
\[
    p^{\mathbb{T}}(w) = \lim_{t\to \infty} \frac{1}{t}\ln \left(p(e^{tw+\sqrt{-1}\phi})\right),
\]
where we have substituted $x=e^{tw+\sqrt{-1}\phi}$. The scaling limit of Ginzburg-Weinstein diffeomorphisms studied in this paper may be viewed as a non-abelian ``tropical limit'': Ginzburg-Weinstein diffeomorphisms can be written as the composition
\[
    {\rm gw}\colon \mathfrak{k}^* \xrightarrow{f} \mathfrak{k}^*\cong i\mathfrak{k} \xrightarrow{\exp} P \xrightarrow{\cong} AN,
\]
where $P = \exp(i\mathfrak{k})$, and $f(A) = Ad^*_{\Psi(A)}A$, $\Psi:\mathfrak{k}^* \to K$, is the flow of a Moser vector field on $\mathfrak{k}^*$ \cite{A,AM}. Using this factorization, we can rewrite the limit of Proposition \ref{proposition 5.1} in the more suggestive form,
\[
    \lim_{t\to \infty} \zeta\circ \Delta_t \circ {\rm gw}_t(A)
    = \lim_{t\to \infty} \frac{1}{t}\ln\left(p(e^{f(tA)})\right),
\]
where $p = p_i^{(k)}$ is a positive polynomial in matrix factorization coordinates, and
we have suppressed the diffeomorphism $P\cong AN$. 

We hope that a more general theory of non-abelian tropical limits including Ginzburg-Weinstein maps for compact Lie groups other than $U(n)$ will emerge. 

{\bf Funding:} Our work was supported in part by the project MODFLAT of the European Research Council (ERC), by the grants number 178794 and number 178828 of the Swiss National Science Foundation (SNSF) and by the NCCR SwissMAP of the SNSF.

{\bf Acknowledgment:} The authors would like to thank B. Hoffman, M. Podkopaeva and A. Szenes for interesting and productive discussions.


\end{document}